\documentclass[reqno,11pt,oneside]{amsart}
\numberwithin{equation}{section}
\usepackage{amsthm,amssymb}
\newtheorem{Lemma}{Lemma}[section]
\newtheorem{Theorem}[Lemma]{Theorem}
\newtheorem{Corollary}[Lemma]{Corollary}
\theoremstyle{definition}
\newtheorem{Remark}{Remark}[section]

\usepackage{geometry}
\usepackage{tensor}
\usepackage{enumerate}
\usepackage[colorlinks=true]{hyperref}
\newcommand{\pa}{\partial}
\newcommand{\Fg}{\mathfrak{g}}
\newcommand{\Fh}{\mathfrak{h}}
\newcommand{\Fl}{\mathfrak{l}}

\newcommand{\Fu}{\mathfrak{u}}
\newcommand{\Fsl}{\mathfrak{sl}}
\newcommand{\al}{\alpha}
\newcommand{\be}{\beta}

\newcommand{\la}{\lambda}

\newcommand{\De}{\Delta}

\newcommand{\op}{\operatorname}
\title[ABRR Summation Formulas for Relative Extremal Projectors]{ABRR Summation Formulas\\for Relative Extremal Projectors}
\author{Jonas T. Hartwig}
\address{Department of Mathematics, Iowa State University, Ames, IA-50011, USA}
\email{jth@iastate.edu}
\urladdr{http://jthartwig.net}
\thanks{The author was supported in part by the Army Research Office grant W911NF-24-1-0058.
The author is grateful to Hjalmar Rosengren for pointing out the reference \cite{Clebsch1862}.}
\date{}

\begin{document}
\begin{abstract}
In 2004, Khoroshkin proved that the extremal projector is equivalent to the dynamical twist. The Arnaudon-Buffenoir-Ragoucy-Roche (ABRR) equation, satisfied by the dynamical twist, yields a recursive formula for the coefficients in the extremal projector.
The resulting summation formula is uniform and powerful in its applications to representation theory and Mickelsson-Zhelobenko reduction algebras.

In this paper we generalize this recursive formula to the relative extremal projector, introduced by Conley and Sepanski in 2003. When the Levi subalgebra is the Cartan subalgebra, we recover the usual ABRR recursion. We also find a compact and explicit expression for the solution to the recursion.
Our setting includes infinite-dimensional contragredient Lie superalgebras, Kac-Moody algebras, basic classical Lie superalgebras, and finite-dimensional reductive Lie algebras.
\end{abstract}
\maketitle 

\section{Introduction}\label{sec:introduction}
\subsection{Historical Background and Motivation}
The extremal projector is a unique object in mathematics. It seems to have been discovered and subsequently rediscovered independently several times. In 1862, Clebsch \cite[Eq.~(3.)]{Clebsch1862} proved that that if $v=v(x,y,z)$ is a smooth homogeneous function of degree $n$, then 
\begin{equation}
	u = v-\frac{(x^2+y^2+z^2)\cdot \De(v)}{2(2n-1)}+\frac{(x^2+y^2+z^2)^2\cdot\De^2(v)}{2\cdot 4\cdot (2n-1)\cdot (2n-3)}-\cdots
\end{equation}
solves the Laplace equation $\De u=0$. Introducing the operators
\begin{equation}
	e = \frac{1}{2}\sum_i (\pa_i)^2,\qquad f = -\frac{1}{2}\sum_i (x_i)^2,\qquad h = -\frac{1}{2}\sum_i (x_i\pa_i+\pa_ix_i),
\end{equation}
Clebsch's operator $v\mapsto u$ may be expressed as
\begin{equation}\label{eq:sl2-projector}
	P = 1 - \frac{1}{h+2}fe + \frac{1}{2!}\frac{1}{(h+2)(h+3)}f^2e^2 - \frac{1}{3!}\frac{1}{(h+2)(h+3)(h+4)}f^3e^3+\cdots.
\end{equation}
In a preprint published 98 years later, physicist Per-Olov Löwdin writes down the same expression \cite[Eq. (31)]{Low1958} (later published in \cite{Low1964}) in the context of describing angular momentum wave functions for composite systems in quantum mechanics.
This operator was further studied as a formal series by Shapiro \cite{Shapiro1965}.

Today, we recognize the expression \eqref{eq:sl2-projector} as the \emph{extremal projector} \cite{AST1971,Zh1989,Tolstoy2005,Tolstoy2011} for the three-dimensional simple Lie algebra.
It can be defined as an element of a completion $\bar U$ of a localization of the universal enveloping algebra of $\Fsl(2,\mathbb{C})$.

The remarkable properties of $P$, that in fact define the series \eqref{eq:sl2-projector} uniquely, are:
\begin{equation}
	eP=0,\qquad Pf=0,\qquad hP=Ph,\qquad P^2=P,\qquad
	P\equiv 1 \mod \bar Ue.
\end{equation}
This means that $P$ is a projector onto the space of extremal vectors
\begin{equation}
	V^+ =\{v\in V\mid e\cdot v=0\},
\end{equation}
for any $\bar U$-module $V$, hence the name ``extremal projector''.
The space of extremal (also known as singular) vectors plays an important role in representation theory. One explanation comes from reciprocity:
\begin{equation}
	\op{Hom}_{\Fsl_2}(M(\la),V)\cong (V^+)_\la
\end{equation}
The space of intertwining operators from a Verma module $M(\la)$ of highest weight $\la$ to a finite-dimensional representation $V$ (or object of category $\mathcal{O}$) is naturally isomorphic to the $\la$-weight space in the extremal space. Simply put: \emph{the extremal projector is a tool for constructing morphisms}.
Phrased this way, it is clear why it has been such a popular tool throughout representation theory and theoretical physics.
For more details on applications, we refer to \cite{Tolstoy2005,Tolstoy2011} and references therein.

\subsection{Higher Rank: Multiplicative Approaches}

In 1971, Asherova-Smirnov-Tolstoy proved the existence of the extremal projector $P_\Fg$ for an arbitrary simple finite-dimensional complex Lie algebra. It has the following properties:
\begin{equation}\label{eq:Pg-properties}
	\Fg_+ P_\Fg=0,\quad P_\Fg\Fg_-=0,\quad [\Fh,P_\Fg]=0,\quad P_\Fg^2=P_\Fg,\quad P_\Fg\equiv 1\mod \bar U\Fg_+.
\end{equation}
Here $\Fg=\Fg_-\oplus\Fh\oplus\Fg_+$ denotes a triangular decomposition. They were able to prove that $P_\Fg$ can be expressed as a product of shifted versions of $P_{\Fsl_2}$, one factor for each positive root of $\Fg$.
This factorization method turned out to be wildly successful and was clarified and extended over several decades to Lie superalgebras, quantum groups, Kac-Moody algebras. We refer to \cite{Tolstoy2011} for a review and summary of these results. For a review and details of the semisimple case, see also \cite{Zh1989}. Further perspectives on the projector were given in \cite{KhoOgi2008} and \cite{HW2022}.
Zhelobenko \cite[Eq.~(2.6)]{Zh1993} also provided a different type of factorization, with infinitely many commuting factors, generalizing the $\Fsl_2$ product formulas already appearing in \cite{Low1958,Low1964,Shapiro1965}.

\subsection{Relative Extremal Projectors}
In 2003, Conley and Sepanski \cite{CS03} introduced the notion of a \emph{relative extremal projector} $P_{\Fg,\Fl}$ associated to a pair $(\Fg,\Fl)$ where $\Fg$ is a complex finite-dimensional reductive Lie algebra and $\Fl$ is the Levi subalgebra of a standard parabolic in $\Fg$. The properties \eqref{eq:Pg-properties} are generalized to
\begin{equation}
	\Fu_+ P_{\Fg,\Fl}=0,\quad P_{\Fg,\Fl}\Fu_-=0,\quad [\Fl,P_{\Fg,\Fl}]=0,\quad P_{\Fg,\Fl}^2=P_{\Fg,\Fl},\quad P_{\Fg,\Fl}\equiv 1\mod \bar U\Fu_+.
\end{equation}
Here $\Fg=\Fu_-\oplus \Fl\oplus \Fu_+$.
They proved that it can be expressed as infinite commuting products, generalizing Zhelobenko's result \cite{Zh1993}. Further commutative and non-commutative product formulas for $P_{\Fg,\Fl}$ were given in \cite{CS05,CS15}.

We have avoided discussing the details of the completion of the enveloping algebra to which these extremal projectors belong. The reason is one of pragmatism: Zhelbenko proved, e.g. in \cite[Theorem 1]{Zh1989}, that the completion is isomorphic to the algebra $E(\Fg)=\oplus_{\la\in\Fh^\ast}\op{End}_{\text{Mod-}U'(\Fh)}(M)_\la$ where $M$ is the localized universal Verma module $M=U'(\Fg)/U'(\Fg)\Fg_+$, $U'(\Fg)=S^{-1}U(\Fg)$, $S=U(\Fh)\setminus\{0\}$. From our perspective, $E(\Fg)$ \emph{is} the (Yoneda) completion, see \cite{HW2022,FH2025}.
Thus, in this paper we regard the relative extremal projector (operator) as a linear operator on the localized universal Verma module $M$. The central task is to express this operator using elements of the Lie algebra.

\subsection{Higher Rank: Additive Approaches}

The theory of factorizations of extremal projectors is deep, and filled with surprising discoveries.
However, the actual application of the projector often requires expanding the product and rewriting the terms so that positive root vectors are on the right and negative on the left.
This can be a frustrating and error-prone exercise in practice.
Conley and Sepanski proved the existence of such summation formulas in the relative case, \cite[Thm.~8]{CS03}, but the coefficients are not easily computable; they are expressed in terms dual bases with respect to the Shapovalov form. Computing them means solving an infinite tower of ever larger systems of linear equations. Nevertheless, we utilized their ideas at a crucial moment during the writing of this paper, see Remark \ref{rem:alternative-route-to-solution}.

The astonishing fact is that there is a completely different route to the extremal projector, which is so simple it could easily have been found in the 1970s (or 1870s!). It directly leads to an explicit summation formula for the projector which is ``normal ordered'' from the start, and all coefficients are explicitly known. The method is universal; no special features of root systems are needed.
It only uses the quadratic Casimir, and nothing else. This method is detailed in the present paper, and it is inspired by the ideas in \cite{Khoroshkin2004,ABRR}.

The convoluted route to this discovery goes through integrable face models in statistical mechanics, and its associated dynamical Yang-Baxter equation and dynamical twist.
The main point is that Khoroshkin \cite{Khoroshkin2004} showed that the dynamical twist
from the work of Arnaudon-Buffenoir-Ragoucy-Roche (ABRR) \cite{ABRR}, is just once again the extremal projector in disguise. 
Furthermore, ABRR found an implicit equation for the dynamical twist, which allows one to compute its terms as an infinite series, with explicit coefficients.
We refer to \cite{ABRR,EV1999,Khoroshkin2004} and related references.

The result, found in \cite{Khoroshkin2004}, is that the extremal projector $P_\Fg$ associated to a reductive finite-dimensional complex Lie algebra $\Fg$ can be expressed as
\begin{equation}
	P_\Fg = 1+\sum_{\be\in\De_+}P_\Fg(\be)f_{\be_1}e_{\be_1}+\sum_{\be_1,\be_2\in\De_+} P_\Fg(\be_1,\be_2)f_{\be_1}f_{\be_2}e_{\be_2}e_{\be_1}+\cdots,
\end{equation}
where the coefficients $P_\Fg(\be_1,\ldots,\be_\ell)$ for $\be_i\in Q_{\succ 0}$ are given by the ABRR recursion 
\begin{equation}\label{eq:ABRR-intro}
	P_\Fg(\be_1,\ldots,\be_\ell) = - \psi_{\be_1}P_\Fg(\be_1+\be_2,\be_3,\ldots,\be_\ell),
\end{equation}
with initial condition $P_\Fg(\be)=-\psi_\be$ and
\begin{equation}\label{eq:psi-def-intro}
	\psi_\la = \frac{1}{h_\la + (\rho,\la)+\frac{1}{2}(\la,\la)}.
\end{equation}
Here $(\cdot,\cdot)$ is a non-degenerate invariant symmetric bilinear form on $\Fg$, $e_\be$ and $f_\be$ are dual bases for $\Fg_+$ respectively $\Fg_-$, and $\la\mapsto h_\la$ the inverse of $h\mapsto (h,\cdot)$, and $Q_{\succ 0}=Q_+\setminus\{0\}$ is the nonzero elements in the positive cone in the root lattice, see Section \ref{sec:example-reductive} for details.

Note that the recursion \eqref{eq:ABRR-intro} is easy to solve explicitly:
\begin{equation}\label{eq:ABRR-solution-intro}
	P_\Fg(\be_1,\ldots,\be_\ell)=(-1)^\ell \psi_{\be_1}\psi_{\be_1+\be_2}\cdots\psi_{\be_1+\be_2+\cdots+\be_\ell}.
\end{equation}

\subsection{Main Result}
For simplicity we only state the main result here for the case of finite-dimensional reductive Lie algebras. The more general results of Theorems \ref{thm:relative-extremal-projector-sum}, \ref{thm:relative-ABRR}, \ref{thm:explicit} apply in the generality detailed in Section \ref{sec:setup}.

\begin{Theorem}
Let $\Fg$ be a reductive finite-dimensional complex Lie algebra 
 and let $\Fl$ be the Levi subalgebra of a standard parabolic subalgebra of $\Fg$.
The relative extremal projector can be expanded as follows:
\begin{equation}
P_{\Fg,\Fl}=1+\sum_{\be\in\De_+}P_{\Fg,\Fl}(\be)f_\be e_\be + \sum_{\be_1,\be_2\in\De_+}P_{\Fg,\Fl}(\be_1,\be_2)f_{\be_1}f_{\be_2}e_{\be_2}e_{\be_1}+\cdots 
\end{equation}
where the coefficients $P_{\Fg,\Fl}(\be_1,\ldots,\be_\ell)$ for $\be_i\in Q_{\succ 0}$ are determined recursively by
\begin{equation}\label{eq:relative-ABRR-intro}
P_{\Fg,\Fl}(\be_1,\ldots,\be_\ell) = -\psi_{\be_1}P_{\Fg,\Fl}(\be_1+\be_2,\be_3,\ldots,\be_\ell)+\delta_{\be_1\in Q_+(\Fl)}\psi_{\be_1}P_{\Fg,\Fl}(\be_2,\be_3,\ldots,\be_\ell)^{\be_1}
\end{equation}
and $P_{\Fg,\Fl}(\beta)=(-1+\delta_{\be_1\in Q_+(\Fl)})\psi_{\be_1}$, with $\psi_\be$ defined by \eqref{eq:psi-def-intro}.
Here $\delta_{\be\in Q_+(\Fl)}$ is $1$ if $\be$ is in the Levi cone $Q_+(\Fl)$ and is zero otherwise. 
The superscript $\be_1$ refers to the action of $\Fh^\ast$ on the fraction field of $U(\Fh)$ by shift automorphisms, determined by $h^\be=h+\be(h)$.
If all $\be_i$ belong to $Q_+(\Fl)$, then $P_{\Fg,\Fl}(\be_1,\ldots,\be_\ell)=0$. If at least one $\be_i$ is not in $Q_+(\Fl)$,
letting $m=\min\{i\mid\be_i\notin Q_+(\Fl)\}$, the explicit solution to \eqref{eq:relative-ABRR-intro} can be written
\begin{align}
P_{\Fg,\Fl}(\be_1,\ldots,\be_\ell)=
\sum_{k=0}^{m-1}(-1)^{\ell-k} &\Big(\psi_{\be_1+\cdots+\be_k}\psi_{\be_2+\cdots+\be_k}^{\be_1}\cdots\psi_{\be_k}^{\be_1+\cdots+\be_{k-1}} \nonumber\\
 &\cdot \psi_{\be_{k+1}}^{\be_1+\cdots+\be_k}\psi_{\be_{k+1}+\be_{k+2}}^{\be_1+\cdots+\be_k}\cdots \psi_{\be_{k+1}+\cdots+\be_\ell}^{\be_1+\cdots+\be_k}\Big).
 \label{eq:relative-ABRR-solution-intro}
\end{align}
\end{Theorem}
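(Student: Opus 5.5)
The plan is to realize $P_{\Fg,\Fl}$ as an operator on the localized universal Verma module $M=U'(\Fg)/U'(\Fg)\Fg_+$ and to characterize it by a linear equation involving the quadratic Casimir, in the spirit of ABRR/Khoroshkin. Write the Casimir as
\[
C=C_\Fh+2h_\rho+2\sum_{\be}f_\be e_\be+(\text{terms in }U\Fg_+),
\]
and split $C=C_\Fl+C_{\Fu}$, where $C_\Fl$ is the Casimir of $\Fl$; the sum over root vectors splits into $\Fl$-root pairs and $\Fu$-root pairs. The defining properties $\Fu_+P=0$, $[\Fl,P]=0$ and $P\equiv1 \bmod \bar U\Fu_+$ imply two facts. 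First, $CP=PC$ on $M$. Second, because $P$ commutes with $\Fl$, the element $C_\Fl$ commutes with $P$. Hence $P$ intertwines $C-C_\Fl$, and I would turn this into an ABRR-type equation:
\[
[\,h\text{-part},P\,]\;=\;-\Big(\sum_{\be\in\De_+}f_\be e_\be\Big)P+P\Big(\sum_{\be\in\De_+(\Fl)}f_\be e_\be\Big),
\]
with everything normal ordered. The right-hand correction term comes from the fact that $C_\Fl$ acts on the right of $P$ through its own $f e$ terms, rather than annihilating as $\Fg_+$ does.

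Next, expand $P=\sum \!P(\be_1,\ldots,\be_\ell)f_{\be_1}\cdots f_{\be_\ell}e_{\be_\ell}\cdots e_{\be_1}$ with coefficients in $\mathrm{Frac}\,U(\Fh)$ and substitute into this equation.

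\emph{Left side.} Moving $C_\Fh+2h_\rho$ past $f_{\be_1}\cdots f_{\be_\ell}$ produces the factor $\psi_{\be_1+\cdots+\be_\ell}^{-1}$ evaluated appropriately. To match the first-index form of the recursion, I will instead move the prefactor $f_{\be_1}$ in the term $\sum_\be f_\be e_\be P$. The operator $e_{\be}$ must be pushed past the $f$'s of $P$. The cleanest way is to use uniqueness and the dual-basis property, so that the combination $\sum_\be f_\be e_\be\, f_\gamma\cdots$ reorganizes, modulo $\bar U\Fu_+$ and through the Casimir identity, into $f_{\be_1}$ merged with the next index. This is exactly the mechanism that produces $P(\be_1+\be_2,\ldots)$ in the classical case. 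I will treat it as the same computation Khoroshkin/ABRR perform, with the multi-index coefficients indexed by $Q_{\succ0}$ weights rather than roots.

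\emph{Correction term.} The term $P\cdot f_{\be_1}e_{\be_1}$ for $\be_1\in Q_+(\Fl)$ requires moving the coefficient of $P$ past $f_{\be_1}$. This yields the shift $P(\be_2,\ldots)^{\be_1}$ together with the indicator $\delta_{\be_1\in Q_+(\Fl)}$.

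Comparing coefficients of the normal-ordered monomials then gives the recursion \eqref{eq:relative-ABRR-intro}, with base case $P(\be)=(-1+\delta)\psi_\be$ obtained from $\ell=1$. Uniqueness of a solution of the equation with leading term $1$ is automatic, because $\psi$ is invertible in degree $\succ0$. So the series obtained is $P_{\Fg,\Fl}$, once one checks that the recursively defined series satisfies the projector properties, or equivalently that solutions of the Casimir equation coincide with $P_{\Fg,\Fl}$ via its characterization.

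\emph{Explicit formula.} This is proved by induction on $\ell$. If all $\be_i\in Q_+(\Fl)$, induction gives $P=-\psi P(\be_1+\be_2,\ldots)+\psi P(\ldots)^{\be_1}$. Both terms vanish by induction except at $\ell=1$, where $(-1+1)\psi=0$. Otherwise, apply the recursion once. The term $-\psi_{\be_1}P(\be_1+\be_2,\ldots)$ has the same $m$ reduced by one for the merged sequence (or $m=1$ if the merge exits the cone), and the shifted term has index $m-1$. Substituting the claimed formula, the $k=0$ summand arises from iterating the first branch, reproducing \eqref{eq:ABRR-solution-intro}. Each $k\ge1$ summand arises from taking the shift branch at the $k$-th step, after $k-1$ merges, and the product $\psi_{\be_1+\cdots+\be_k}\psi^{\be_1}_{\be_2+\cdots+\be_k}\cdots$ is the telescoping record of those choices. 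Verifying this bookkeeping, and in particular that the branches where merged weights leave $Q_+(\Fl)$ cut the sum off at $k=m-1$, is routine but index-heavy.

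The main obstacle I expect is the second step: deriving the precise relative ABRR equation, and in particular justifying that $C_\Fl$ acts on the right of $P$ producing only the $\Fl$-part $f e$ terms with the correct shifts. If the direct computation is messy, I would fall back on the Conley–Sepanski characterization (cf. Remark \ref{rem:alternative-route-to-solution}). That route verifies $\Fu_+P=0$ and $[\Fl,P]=0$ directly for the candidate series, and then invokes uniqueness.
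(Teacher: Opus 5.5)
\textbf{The derivation of the recursion has a genuine gap.} Since $P_{\Fg,\Fl}$ has weight zero, it commutes with $U(\Fh)$. So the commutator of the Cartan part $\Omega_0$ (your $C_\Fh+2h_\rho$, up to a factor $2$) with $P_{\Fg,\Fl}$ vanishes identically. Moving $\Omega_0$ past $f_{\be_1}\cdots f_{\be_\ell}$ does produce a shift $\Omega_0\mapsto\Omega_0^{-(\be_1+\cdots+\be_\ell)}$, but moving it on past $e_{\be_\ell}\cdots e_{\be_1}$ undoes it. Hence no factor $\psi^{-1}$ appears, and your equation collapses to $\bigl(\sum_{\be\in\De_+}f_\be e_\be\bigr)P=P\bigl(\sum_{\be\in\De_+(\Fl)}f_\be e_\be\bigr)$. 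This is a mere consequence of $\Fu_+P=0$ and $[\Fl,P]=0$.

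The equation also does not characterize $P_{\Fg,\Fl}$. $P_\Fg$ satisfies it too (both sides vanish, since $\Fg_+P_\Fg=0=P_\Fg\Fg_-$) and has leading term $1$, so your uniqueness claim fails whenever $\Fl\neq\Fh$. In ABRR/Khoroshkin the factors $\psi$ arise because the twist lives in a completed tensor product. There the Cartan part sits in one tensor leg only, and multiplication by $\sum_\be f_\be\otimes e_\be$ needs no reordering. Inside $U'(\Fg)$ both features are lost: normal ordering $\bigl(\sum_\be f_\be e_\be\bigr)f_{\be_1}\cdots e_{\be_1}$ produces structure constants of the brackets $[e_\be,f_\gamma]$. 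Moreover, comparing coefficients of root-indexed monomials can never yield a relation involving $P_{\Fg,\Fl}(\be_1+\be_2,\be_3,\ldots,\be_\ell)$. That value is not the coefficient of any monomial, since $\be_1+\be_2$ is in general not a root.

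Deferring this to ``the same computation Khoroshkin/ABRR perform'' therefore leaves the main step unproved. Your fallback (checking $\Fu_+P=0$ directly on the candidate series) is that same unperformed computation. Remark~\ref{rem:alternative-route-to-solution} does not do this either: it takes the resolution of the identity and Khoroshkin's non-relative expansion as input.

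\textbf{The missing idea} is where $\psi_\la$ really comes from. On $M_{-\la}$ the Casimir acts by $\Omega_0^\la$, so $\psi_\la\sum_\be f_\be e_\be$ is the identity on $M_{-\la}$. Iterating this gives $E[\la]$ (Lemma~\ref{lem:E}). Inserting $E[\la-\mu]$ between the $f$'s and the $e$'s of the degree-$\mu$ part, $\mu\prec\la$, expresses the vanishing of $P_{\Fg,\Fl}$ on $M_{-\la}$ for $\la\notin Q_+(\Fl)$ without any reordering. This yields the resolved recursion \eqref{eq:resolved-recursion}, which makes sense for $Q_{\succ 0}$-sequences. The relative ABRR recursion then follows from it by induction on $\ell$ via the cocycle identity \eqref{eq:cocycle}.

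That identity is also what your explicit-formula induction lacks. The products $E(\be_1,\ldots,\be_k)$ are not the record of a single branch of the recursion tree. For instance, if $m=3$, the shift--shift and merge--shift branches together contribute $\psi_{\be_1}\bigl(\psi^{\be_1}_{\be_2}-\psi_{\be_1+\be_2}\bigr)P_\Fg(\be_3,\ldots,\be_\ell)^{\be_1+\be_2}$. This is the $k=2$ summand only because $\psi_{\be_1}\bigl(\psi^{\be_1}_{\be_2}-\psi_{\be_1+\be_2}\bigr)=\psi_{\be_1+\be_2}\psi^{\be_1}_{\be_2}=E(\be_1,\be_2)$ by \eqref{eq:cocycle}. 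With the cocycle identity in hand, your induction on $\ell$ (which correctly tracks how $m$ changes) goes through as in Theorem~\ref{thm:explicit}.
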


Note that when $\Fl=\Fh$, the second term in the recursion \eqref{eq:relative-ABRR-intro} is always zero, and we recover the non-relative ABRR recursion \eqref{eq:ABRR-intro}.
Similarly, in this case $m=1$ and \eqref{eq:relative-ABRR-solution-intro} reduces to \eqref{eq:ABRR-solution-intro}.

\subsection{Summary of Contents}
In Section~\ref{sec:setup}, we introduce the assumptions and notation of our setting.
In Section~\ref{sec:resolution_of_the_identity} we use the quadratic Casimir to write down an element $E[\la]$ of the centralizer $U'(\Fg)^\Fh$ which acts as the identity on the weight space $M_{-\la}$ of the universal Verma module $M$.
Then, in Section~\ref{sec:the_resolved_recursion}, we prove what we call the \emph{resolved recursion} for the relative extremal projector coefficients, which is used in Section~\ref{sec:the_relative_abrr_recursion} to derive the ABRR type recursion for these coefficients.
In Section~\ref{sec:an_explicit_formula_for_the_coefficients}, we give an explicit formula for the solution to the relative ABRR recursion.
Lastly, in Section~\ref{sec:examples}, we write out the expressions for the relative extremal projector coefficients up to length $\ell=3$.
We also provide complete details in the case of finite-dimensional reductive Lie algebras, deriving the expression for $\psi_\la^\mu$.
The case of (not necessarily finite-dimensional) contragredient Lie superalgebras is also treated.

\section{Setup} \label{sec:setup}
\subsection{Main Assumptions}
We work over a field $\mathbb{F}$ of characteristic $0$.
Let $\Fg$ be a (not necessarily finite-dimensional) Lie superalgebra, with a vector superspace decomposition into three subalgebras
\begin{equation}
	\Fg=\Fg_-\oplus\Fh\oplus\Fg_+
\end{equation}
 satisfying the following properties:
\begin{enumerate}
\item $\Fh=\Fh_{\bar 0}$ and is
 maximal abelian in $\Fg$;
\item 
 $\Fg_+$ and $\Fg_-$ have bases $\{e^i_\be\}_{\be\in\De_+, i\in I_\be}$, $\{f^i_\be\}_{\be\in\De_+, i\in I_\be}$ of homogeneous weight vectors ($\De_+$ is a subset of $\Fh^\ast$, while $I_\be$ are index sets):
 \begin{equation}\label{eq:chevalley}
 	[h,e_\be^i]=\be(h)e_\be^i,\qquad [h,f_\be^i]=-\be(h)f_\be^i,\qquad\forall h\in\Fh;
 \end{equation}
\item there exists an $\Omega_0\in U(\Fh)$ such that the following formal expression
defines an operator on $U(\Fg)/U(\Fg)\Fg_+$ commuting with the action of $U(\Fg)$:
\begin{equation}\label{eq:casimir}
	\Omega=\Omega_0+\sum_{\be\in\De_+} \sum_{i\in I_\be}f^i_\be e^i_\be.
\end{equation}
In particular, we require that for each $v\in U(\Fg)/U(\Fg)\Fg_+$,
the expression $f_\be^i e_\be^i v$ is nonzero for at most finitely many $(\be,i)$;
\item \label{it:Omega0}
any element in $U(\Fg)$ commuting with $\Omega_0$ also commutes with all of $\Fh$.
\end{enumerate}

\subsection{Further Notation}
\subsubsection{Positive Cone} Let $Q_+=Q_+(\Fg)\subseteq\Fh^\ast$ denote the \emph{positive cone} of the root lattice, defined as
\begin{equation}
	Q_+ = \mathbb{Z}_{\ge 0}\De_+ = \{k_1\be_1+\cdots+k_n\be_n\mid k_i\in\mathbb{Z}_{\ge 0},\, \be_i\in\De_+\}.
\end{equation}
The positive cone $Q_+$, being an additive submonoid of $\Fh^\ast$, provides a partial order $\preceq$ on $\Fh^\ast$ defined by
\begin{equation}
\la\preceq\mu \quad\Longleftrightarrow\quad \mu-\la\in Q_+.
\end{equation}
$\la\prec \mu$ means $\la\preceq\mu$ and we put
\begin{equation}
	Q_{\succ 0}=\{\la\in Q\mid \la\succ 0\}=Q_+\setminus\{0\}.
\end{equation}

\subsubsection{Localization} Let $S=U(\Fh)\setminus\{0\}$. Since $U(\Fg)$ is generated by weight vectors for $\Fh$, $S$ is a left and right Ore denominator set in $U(\Fg)$.
Let $U'(\Fg)=S^{-1}U(\Fg)$ denote the localization. In particular, $U'(\Fg)$ contains the fraction field $U'(\Fh)$ of $U(\Fh)$.
\subsubsection{Shift Automorphisms} For $\la\in \Fh^\ast$, we let $(\,\cdot\,)^\la:U'(\Fh)\to U'(\Fh),\, \phi\mapsto \phi^\la$, denote the unital $\mathbb{F}$-algebra automorphism determined by the property
\begin{equation}\label{eq:shift-def}
	h^\la = h+\la(h),\quad\forall h\in\Fh.
\end{equation}
The following identities in $U'(\Fg)$ (proved by induction on $\ell$ using \eqref{eq:chevalley}) are useful:
\begin{align}\label{eq:shift}
	f_{\be_1}^{i_1}f_{\be_1}^{i_2}\cdots f_{\be_\ell}^{i_\ell} \phi &= \phi^{\be_1+\be_2+\cdots+\be_\ell}f_{\be_1}^{i_1}f_{\be_2}^{i_2}\cdots f_{\be_\ell}^{i_\ell},\quad\forall \ell\in\mathbb{Z}_{\ge 0},\,\be_i\in\De_+,\,\phi\in U'(\Fh), \\ 
	\phi e_{\be_1}^{i_1}e_{\be_1}^{i_2}\cdots e_{\be_\ell}^{i_\ell} &= e_{\be_1}^{i_1}e_{\be_2}^{i_2}\cdots e_{\be_\ell}^{i_\ell}\phi^{\be_1+\be_2+\cdots+\be_\ell},\quad\forall \ell\in\mathbb{Z}_{\ge 0},\,\be_i\in\De_+,\,\phi\in U'(\Fh).
\end{align}
\subsubsection{Universal Verma}
The \emph{(localized) universal Verma module} $M$ is
\begin{equation}
M=U'(\Fg)/I,\quad I=U'(\Fg)\Fg_+.
\end{equation}
It is a $(U'(\Fg),U'(\Fh))$-bimodule.
In particular, $\Fh$ acts via the adjoint action on $M$:
\begin{equation}
	h\cdot m = hm-mh,\qquad \forall h\in\Fh,\, m\in M.
\end{equation}
Let $\boldsymbol{1}=1+I$ denote the generator for $M$ as a left $U'(\Fg)$-module.
We have
\begin{equation}
	M = \bigoplus_{\la\in Q_+} M_{-\la},\qquad M_{-\la}=\{m\in M\mid h\cdot m=-\la(h)m,\forall h\in\Fh\}.
\end{equation}
The minus is due to $M\cong U'(\Fh)U(\Fg_-)$ as $\Fh$-modules with respect to the adjoint action.
\subsubsection{Extremal Projector}
The \emph{extremal projector (operator)}
\begin{equation}
P_\Fg:M\to M	
\end{equation}
is the linear operator on $M$ defined by
\begin{equation}
P_\Fg\big|_{M_{-\la}} =
\begin{cases}
 \op{Id}_{M_{-\la}},&\la=0,\\
0,&\forall \la\in Q_{\succ 0}.
\end{cases}
\end{equation}
\subsubsection{$Q_+$-Grading on the Cyclic Subalgebra}
Let $U'(\Fg)_\la$ denote the weight space (with respect to adjoint action of $\Fh$) in $U'(\Fg)$ of weight $\la\in\Fh^\ast$. In particular, $U'(\Fg)_0$ is the centralizer of $\Fh$ in $U'(\Fg)$, also known as the \emph{cyclic subalgebra}. By the PBW theorem, each element of $U'(\Fg)_0$ is a linear combination of elements of the form $X=\phi F_\la E_\la$ where $\phi\in U'(\Fh), F_\la\in U(\Fg_-)_{-\la}, E_\la\in U(\Fg_+)_\la$. This defines a vector superspace $Q_+$-grading on $U'(\Fg)_0$ in which $X$ has degree $\la$. We use square brackets to refer to this grading:
\begin{equation}
U'(\Fg)_0=\bigoplus_{\la\in Q_+} U'(\Fg)_0[\la],\qquad U'(\Fg)_0[\la] = U'(\Fh)U(\Fg_-)_{-\la} U(\Fg_+)_\la.
\end{equation}
Note that
\begin{equation}\label{eq:support-property}
	U'(\Fg)_0[\la] M_{-\mu} \neq 0 \quad \Longrightarrow \quad \la \preceq \mu.
\end{equation}
Indeed, $U'(\Fg)_0[\la] M_{-\mu} \neq 0 \Rightarrow 0\neq U(\Fg_+)_\la M_{-\mu}\subseteq M_{-\mu+\la}\Rightarrow -\mu+\la\preceq 0$. 

\subsection{Relative Setting}
To introduce the relative setting we additionally choose a subset of $\De_+$, denoted $\De_+(\Fl)$, with the property that whenever $\be_1,\be_2,\ldots,\be_k\in\De_+$ we have
\begin{equation}\label{eq:levi-cone-property}
\be_1+\be_2+\cdots+\be_k \in \mathbb{Z}_{\ge 0}\De_+(\Fl) \;\Longrightarrow\; \be_i\in\De_+(\Fl) \text{ for all $i=1,2,\ldots,k$}.
\end{equation}
Put
\begin{gather*}
	\Fl_+ = \op{span}\{e_\be^i\mid \be\in\De_+(\Fl),\, i\in I_\be\},\quad 
	\Fl_- = \op{span}\{f_\be^i\mid \be\in\De_+(\Fl),\, i\in I_\be\},\\
	\Fl = \Fl_-\oplus\Fh\oplus\Fl_+,\\
	\Fu_+ = \op{span}\{e_\be^i\mid \be\in \De_+\setminus\De_+(\Fl),\, i\in I_\be\},\quad 
	\Fu_- = \op{span}\{f_\be^i\mid \be\in \De_+\setminus\De_+(\Fl),\, i\in I_\be\}.
\end{gather*}
Then $\Fl$ and $\Fu_\pm$ are subalgebras of $\Fg$, and
\[ 
\Fg=\Fu_-\oplus\Fl\oplus\Fu_+,\qquad [\Fl,\Fu_\pm]\subseteq\Fu_\pm.
\] 
We call $\Fl$ the \emph{Levi subalgebra} associated to $\De_+(\Fl)$.
In the extreme case $\De_+(\Fl)=\emptyset$ we have $\Fl=\Fh$, $\Fu_\pm=\Fg_\pm$. In the other extreme case $\De_+(\Fl)=\De_+$ we have $\Fl=\Fg$, $\Fu_\pm=0$.
The \emph{Levi cone} $Q_+(\Fl)$ is defined by
\begin{equation}
	Q_+(\Fl) = \mathbb{Z}_{\ge 0} \De_+(\Fl) = \{k_1\be_1+\cdots+k_n\be_n\mid k_i\in\mathbb{Z}_{\ge 0},\, \be_i\in \De_+(\Fl) \}.
\end{equation}
Following \cite{CS03}, we define the \emph{relative extremal projector (operator)}
\begin{equation}
P_{\Fg,\Fl}:M\to M	
\end{equation}
to be the linear operator on the localized universal Verma module $M=U'(\Fg)/U'(\Fg)\Fg_+$ given by
\begin{equation}\label{eq:relative-extremal-projector-operator-def}
P_{\Fg,\Fl}\big|_{M_{-\la}} =
\begin{cases}
 \op{Id}_{M_{-\la}},&\forall \la\in Q_+(\Fl),\\
0,&\forall \la\in Q_+\setminus Q_+(\Fl).
\end{cases}
\end{equation}
We emphasize that $M$ is the same as in the non-relative case. Note that
\begin{equation}
	P_{\Fg,\Fh} = P_{\Fg},\qquad P_{\Fg,\Fg}=\op{Id}_M.
\end{equation}
Further properties include
\begin{equation}
	(P_{\Fg,\Fl})^2=P_{\Fg,\Fl},\qquad P_\Fg=P_{\Fg,\Fl}P_{\Fl}=P_{\Fl}P_{\Fg,\Fl},\qquad [U(\Fl),P_{\Fg,\Fl}]\Big|_M=0.
\end{equation}
Here and throughout we use the notation $\Big|_M$ to mean ``as operators on $M$''.

\section{Resolution of the Identity} \label{sec:resolution_of_the_identity}

We derive an explicit element $E[\la]\in U'(\Fg)_0[\la]$ which acts as the identity operator on the weight space $M_{-\la}$ of the localized universal Verma module $M$. Following \cite{ForQuiSki2025}, we call such an expansion the \emph{resolution of the identity (on $M_{-\la}$)}. This will be the crucial tool in deriving what we call the \emph{resolved recursion} for the relative extremal projector.

By Assumption \eqref{it:Omega0}, $\Omega_0^\la-\Omega_0$ is nonzero for all nonzero $\la\in Q_+$.
In $U'(\Fh)$, put
\begin{equation}\label{eq:psi-def}
	\psi_\la = \frac{1}{\Omega_0^\la-\Omega_0},\qquad \la\in Q_{\succ 0}.
\end{equation}
By $\psi_\la^\mu$ we will mean $(\psi_\la)^\mu=\frac{1}{\Omega_0^{\la+\mu}-\Omega_0^\mu}$, the $\mu$-shift of the rational function $\psi_\la\in U'(\Fh)$, with respect to the automorphism $(\cdot)^\mu$ defined in \eqref{eq:shift-def}.
For $\ell\in\mathbb{Z}_{>0}$ and $(\be_1,\ldots,\be_\ell)\in (Q_{\succ 0})^\ell$, we define $E(\be_1,\be_2,\ldots,\be_\ell)\in U'(\Fh)$ by
\begin{equation}\label{eq:E-coefficients}
E(\be_1,\be_2,\ldots,\be_\ell) = \psi_{\be_1+\be_2+\cdots+\be_\ell}\psi^{\be_1}_{\be_2+\cdots+\be_\ell}\cdots\psi^{\be_1+\be_2+\cdots+\be_{\ell-1}}_{\be_\ell}.
\end{equation}
In the following result, we only need $E(\be_1,\ldots,\be_\ell)$ defined for $\be_i\in\De_+$, but to prove the relative ABRR recursion in Section~\ref{sec:the_relative_abrr_recursion} it will be necessary to define \eqref{eq:E-coefficients} for all $\be_i\in Q_{\succ 0}$.

\begin{Lemma}[Resolution of the Identity]\label{lem:E}
For $\la\in Q_{\succ 0}$, let
\begin{equation}\label{eq:E-formula1}
E[\la]=\sum_{\ell=1}^\infty\; \sum_{\substack{(\be_1,\be_2,\ldots,\be_\ell)\in(\De_+)^\ell\\ \be_1+\be_2+\cdots+\be_\ell=\la}} 
E(\be_1,\be_2,\ldots,\be_\ell)
\sum_{i_j\in I_{\be_j}}
f_{\be_1}^{i_1}f_{\be_2}^{i_2}\cdots f_{\be_\ell}^{i_\ell}
e_{\be_\ell}^{i_\ell}\cdots e_{\be_2}^{i_2} e_{\be_1}^{i_1}.
\end{equation}
Then 
\begin{equation}\label{eq:E-property}
	E[\la]\Big|_{M_{-\la}} = \op{Id}_{M_{-\la}}.
\end{equation}
\end{Lemma}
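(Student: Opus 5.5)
We argue by strong induction on $\la\in Q_{\succ 0}$ with respect to $\preceq$, using the Casimir $\Omega$ from \eqref{eq:casimir}. The key observation is that on the universal Verma module $M$, $\Omega$ acts on $\boldsymbol 1$ by $\Omega_0$ and commutes with the left $U'(\Fg)$-action. Hence for $m=X\boldsymbol 1\in M_{-\la}$ with $X\in U'(\Fg)_{-\la}$ we get
\[
\Omega m = X\Omega\boldsymbol 1 = X\Omega_0\boldsymbol 1=\Omega_0^{\la}X\boldsymbol 1,
\]
using the shift relation \eqref{eq:shift}, since $X$ has weight $-\la$. So on $M_{-\la}$, $\Omega$ acts as left multiplication by $\Omega_0^\la$. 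Hence $\Omega-\Omega_0=\sum_{\be,i} f^i_\be e^i_\be$ acts on $M_{-\la}$ as $\Omega_0^\la-\Omega_0=\psi_\la^{-1}$, the left multiplication being by an element of $U'(\Fh)$. Thus
\[
\op{Id}_{M_{-\la}}=\psi_\la\sum_{\be\in\De_+}\sum_{i\in I_\be} f^i_\be e^i_\be\Big|_{M_{-\la}}.
\]
This is the base-free recursion. The sum is finite on each vector by Assumption (3), and $e^i_\be$ kills $M_{-\la}$ unless $\be\preceq\la$, by \eqref{eq:support-property}.

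Now split the sum over $\be$. If $\be=\la$, the term $\psi_\la f^i_\la e^i_\la$ is exactly the $\ell=1$ term of $E[\la]$, since $E(\la)=\psi_\la$. If $\be\prec\la$, then $e^i_\be m\in M_{-(\la-\be)}$ with $\la-\be\in Q_{\succ0}$. By the induction hypothesis we may insert $E[\la-\be]$ in front of $e^i_\be m$, which gives
\[
\psi_\la\, f^i_\be\, E[\la-\be]\, e^i_\be.
\]
Writing out $E[\la-\be]$ as a sum over $(\be_2,\ldots,\be_\ell)$ with $\be_2+\cdots+\be_\ell=\la-\be$, we move its coefficient $E(\be_2,\ldots,\be_\ell)$ leftward past $f^i_\be$ using \eqref{eq:shift}. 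This turns it into $E(\be_2,\ldots,\be_\ell)^{\be}$, and the resulting monomial is $f_{\be}f_{\be_2}\cdots f_{\be_\ell}e_{\be_\ell}\cdots e_{\be_2}e_\be$, of the required palindromic shape.

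It remains to check the coefficient identity
\[
E(\be_1,\be_2,\ldots,\be_\ell)=\psi_{\be_1+\cdots+\be_\ell}\,E(\be_2,\ldots,\be_\ell)^{\be_1},
\]
which is immediate from \eqref{eq:E-coefficients}: shifting each factor $\psi^{\be_2+\cdots+\be_{j-1}}_{\be_j+\cdots+\be_\ell}$ by $\be_1$ produces exactly the remaining factors. Summing over $\be_1=\be$ and the tail then reproduces every term of $E[\la]$ with $\ell\ge2$ exactly once. This closes the induction.

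The main obstacle is bookkeeping rather than ideas. First, for the infinite sum $E[\la]$ to make sense as an operator on $M_{-\la}$, only finitely many terms may act nonzero on each vector. This follows by iterating Assumption (3) and noting that each tuple must sum to $\la$. Second, in the super case, the order $e_{\be_\ell}\cdots e_{\be_1}$ and the placement of $\psi$'s must be checked against \eqref{eq:shift} with no sign issues; none arise, since the $\psi$'s are even. The base case is the minimal elements of $Q_{\succ0}$, where only $\be=\la$ contributes; this is already covered by the same argument with an empty inductive step.
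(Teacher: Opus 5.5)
Your proof is correct and is essentially the paper's argument. The paper derives the same one-step identity $\psi_\la\sum_{\be,i}f^i_\be e^i_\be\Big|_{M_{-\la}}=\op{Id}_{M_{-\la}}$ from the Casimir and then iterates it by inserting it between the $f$'s and $e$'s (``repeating this''). Your strong induction on $\la$, together with the coefficient recursion $E(\be_1,\ldots,\be_\ell)=\psi_{\be_1+\cdots+\be_\ell}E(\be_2,\ldots,\be_\ell)^{\be_1}$, packages that iteration more formally; like the paper's version, it tacitly relies on $\preceq$ being well-founded on $Q_{\succ 0}$, which holds in all the settings the paper considers.
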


\begin{Remark}
In the case when the root spaces of $\Fg$ are one-dimensional, the sets $I_\be$ are singletons; the summation $\sum_{i_j\in I_{\be_j}}$ and all superscripts $i_j$ can be removed.
\end{Remark}

\begin{proof}
Fix $\la\in Q_{\succ 0}$.
The action of the quadratic Casimir $\Omega$ (see \eqref{eq:casimir}) on a weight vector $v\in M_{-\la}$ is calculated as follows. Write $v=\hat v\boldsymbol{1}$ where $\boldsymbol{1}=1+I$ is the generator of $M$,
and $\hat v\in U'(\Fh)U(\Fg_-)_{-\la}$. Then, since $\Omega$ commutes with the action of $U(\Fg)$ on $M$ and all $e_\be^i$ annihilate $\boldsymbol{1}$, we have
\begin{equation}
	\Omega v = \Omega \hat v\boldsymbol{1} = \hat v \Omega \boldsymbol{1} = \hat v \Omega_0 \boldsymbol{1} \overset{\eqref{eq:shift}}{=} \Omega_0^\la \hat v\boldsymbol{1} = \Omega_0^\la v.
\end{equation}
Subtracting $\Omega_0 v$ from both sides, we can write this as
\begin{equation}
	\sum_{\be\in\De_+}\sum_{i\in I_{\be}}f_\be^i e_\be^i\Big|_{M_{-\la}} = \big(\Omega_0^\la-\Omega_0\big)\op{Id}_{M_{-\la}}.
\end{equation}
Multiplying both sides by $\psi_\la$, defined in \eqref{eq:psi-def}, we find that
\begin{equation}\label{eq:resolution-pf1}
	\sum_{\be_1\in\De_+} \psi_\la \sum_{i_1\in I_{\be_1}}f_{\be_1}^{i_1} e_{\be_1}^{i_1} \Big|_{M_{-\la}} = \op{Id}_{M_{-\la}}.
\end{equation}
Since $e_{\be_1}^{i_1}M_{-\la}\subseteq M_{-\la+\be_1}$, in nonzero terms we have $-\la+\be_1\preceq 0$. Without losing equality, we may thus add the condition $\be_1\preceq \la$ in the sum over $\be_1$.
We then separate the term in which $\be_1=\la$ and the terms in which $\be_1\prec \la$. This gives
\begin{equation}
\op{Id}_{M_{-\la}} = 
\sum_{\substack{\be_1\in\De_+\\ \be_1=\la}} \psi_\la \sum_{i_1\in I_{\be_1}}f_{\be_1}^{i_1} e_{\be_1}^{i_1} \Big|_{M_{-\la}} 
+\sum_{\substack{\be_1\in\De_+\\ \be_1\prec \la}} \psi_\la \sum_{i_1\in I_{\be_1}}f_{\be_1}^{i_1} e_{\be_1}^{i_1} \Big|_{M_{-\la}}.
\end{equation}
In the second sum, since $e_{\be_1}^{i_1}M_{-\la}\subset M_{-(\la-\be_1)}$ and
$\la-\be_1\neq 0$, we insert $\op{Id}_{M_{-(\la-\be_1)}}$ between $f_{\be_1}^{i_1}$ and $e_{\be_1}^{i_1}$ and can use \eqref{eq:resolution-pf1} again (with $\la$ replaced by $\la-\be_1$):
\begin{align}
\op{Id}_{M_{-\la}} &=
\sum_{\substack{\be_1\in\De_+\\ \dot\be_1=\la}}
\psi_{\be_1} \sum_{i_1\in I_{\be_1}}f_{\be_1}^{i_1}e_{\be_1}^{i_1}\Big|_{M_{-\la}} +
\sum_{\substack{(\be_1,\be_2)\in(\De_+)^2\\ \be_1\prec\la}} \psi_\la \sum_{\substack{i_1\in I_{\be_1}\\ i_2\in I_{\be_2}}}
 f_{\be_1}^{i_1}\big(\psi_{\la-\dot\be_1} f_{\be_2}^{i_2}e_{\be_2}^{i_2}\big) e_{\be_1}^{i_1} \Big|_{M_{-\la}} \nonumber\\ &\overset{\eqref{eq:shift}}{=}
\sum_{\substack{\be_1\in\De_+\\ \be_1=\la}} \psi_{\be_1}\sum_{i_1\in I_{\be_1}}f_{\be_1}^{i_1}e_{\be_1}^{i_1}\Big|_{M_{-\la}} +
\sum_{\substack{(\be_1,\be_2)\in(\De_+)^2\\ \be_1\prec\la}}
\psi_\la \psi_{\la-\be_1}^{\be_1} \sum_{i_j\in I_{\be_j}} f_{\be_1}^{i_1} f_{\be_2}^{i_2}e_{\be_2}^{i_2}e_{\be_1}^{i_1} \Big|_{M_{-\la}}. \nonumber 
\end{align}
Let us repeat the process one more time for clarity. In the sum over $(\be_1,\be_2)$, we keep only those terms in which $\be_1+\be_2\preceq \la$, as in the other terms (if any) we have $e_{\be_2}^{i_2}e_{\be_1}^{i_1}M_{-\la}=0$ for reasons of support. Then we separate out the terms in which $\be_1+\be_2=\la$. In the remaining terms, $\be_1+\be_2\prec \la$ and we can insert $\op{Id}_{M_{-(\la-\be_1-\be_2)}}$ in the middle, and use \eqref{eq:resolution-pf1},\eqref{eq:shift} again, giving
\begin{align}
\op{Id}_{M_{-\la}} &=
\sum_{\substack{\be_1\in\De_+\\ \dot\be_1=\la}} \psi_{\be_1} \sum_{i_1\in I_{\be_1}}f_{\be_1}^{i_1}e_{\be_1}^{i_1}\Big|_{M_{-\la}}+
\sum_{\substack{(\be_1,\be_2)\in(\De_+)^2\\ \be_1+\be_2=\la}}
\psi_\la \psi_{\la-\be_1}^{\be_1} \sum_{i_j\in I_{\be_j}} f_{\be_1}^{i_1} f_{\be_2}^{i_2}e_{\be_2}^{i_2}e_{\be_1}^{i_1} \Big|_{M_{-\la}} \\ 
&\quad+\sum_{\substack{(\be_1,\be_2,\be_3)\in(\De_+)^3\\ \be_1+\be_2\prec\la}}
\psi_\la \psi_{\la-\be_1}^{\be_1} \psi_{\la-\be_1-\be_2}^{\be_1+\be_2} \sum_{i_j\in I_{\be_j}}f_{\be_1}^{i_1} f_{\be_2}^{i_2}f_{\be_3}^{i_3}e_{\be_3}^{i_3}e_{\be_2}^{i_2}e_{\be_1}^{i_1} \Big|_{M_{-\la}}.
\end{align}
Repeating this, we obtain
\begin{equation}
\op{Id}_{M_{-\la}} =\sum_{\ell=1}^\infty \; \sum_{\substack{(\be_1,\ldots,\be_\ell)\in(\De_+)^\ell\\ \be_1+\cdots+\be_\ell=\la}}
\psi_\la \psi_{\la-\be_1}^{\be_1} \cdots \psi_{\la-\be_1-\cdots-\be_{\ell-1}}^{\be_1+\cdots+\be_{\ell-1}}
\sum_{i_j\in I_{\be_j}}
f_{\be_1}^{i_1}\cdots f_{\be_\ell}^{i_\ell}e_{\be_\ell}^{i_\ell}\cdots e_{\be_1}^{i_1} \Big|_{M_{-\la}} 
\end{equation}
In each term we may replace $\la$ by $\be_1+\cdots+\be_\ell$. This proves \eqref{eq:E-property}. 
\end{proof}

\begin{Remark}
The assumption that the action of $\Omega$ on $U(\Fg)/U(\Fg)\Fg_+$ is well-defined, implies that $\sum_{\be\in\De_+, i\in I_\be} f_\be^i e_\be^i$ is well-defined on $M$. That is, on each vector in $M$ the result is a finite sum. Therefore $E[\la]$ is also finite on each element of $M$.
\end{Remark}

\begin{Remark}
It is interesting to note that the ``atom'' of the construction, namely
\begin{equation}
	\frac{\Omega-\Omega_0}{\Omega_0^\la-\Omega_0}=\sum_{\be\in\De_+} \psi_\la \sum_{i\in I_\be} f^i_\be e^i_\be
\end{equation}
is invariant under all affine transformations $\Omega \mapsto a\Omega+b$. The divided difference in the LHS is
similar to what appears in \cite[Eq. (26)]{Zh1993} and \cite[Thm.~7]{CS03},\cite[Thm.~3.4]{CS05} in their infinite factorizations of extremal projectors and relative extremal projectors, respectively. 
\end{Remark}

\section{The Resolved Recursion}\label{sec:the_resolved_recursion}
In this section we prove that the relative extremal projector $P_{\Fg,\Fl}$ can be expressed as an infinite sum of symmetric weight zero elements with coefficients from the fraction field $U'(\Fh)$. The coefficients are defined recursively, using the coefficients $E(\be_1,\ldots,\be_\ell)$ in the resolution of the identity $E[\mu]$. For this reason, we call it the \emph{resolved recursion}.
As in the previous section, in Theorem \ref{thm:relative-extremal-projector-sum} below we only need $P_{\Fg,\Fl}(\be_1,\ldots,\be_\ell)$ defined for $\be_i\in\De_+$, but for the statement of the relative ABRR recursion in Section~\ref{sec:the_relative_abrr_recursion} we need to allow $\be_i\in Q_{\succ 0}$.

For $\ell\in\mathbb{Z}_{\ge 0}$ and $(\be_1,\ldots,\be_\ell)\in(Q_{\succ 0})^\ell$, define $P_{\Fg,\Fl}(\be_1,\be_2,\ldots,\be_\ell)\in U'(\Fh)$ recursively as follows:
\begin{subequations}\label{eq:P-coefficients-def}
\begin{enumerate}
	\item When $\ell=0$: 
\begin{equation} \label{eq:resolved-recursion-0}
	P_{\Fg,\Fl}(\emptyset)=1, 
\end{equation}
	where $\emptyset$ denotes the empty sequence.
	\item When $\ell>0$ and $\be_i\in Q_+(\Fl)$ for all $i=1,\ldots,\ell$: 
\begin{equation}\label{eq:resolved-recursion-levi}
	P_{\Fg,\Fl}(\be_1,\ldots,\be_\ell)=0.
\end{equation}
	\item When $\ell>0$ and $(\be_1,\ldots,\be_\ell)\in (Q_+)^\ell$ and at least one $\be_i$ belongs to $Q_+\setminus Q_+(\Fl)$:
\begin{equation}\label{eq:resolved-recursion}
	P_{\Fg,\Fl}(\be_1,\ldots,\be_\ell)
	=-\sum_{k=0}^{\ell-1} P_{\Fg,\Fl}(\be_1,\ldots,\be_k)E(\be_{k+1},\ldots,\be_\ell)^{\be_1+\cdots+\be_k},
\end{equation}
where $E(\be_{k+1},\ldots,\be_\ell)\in U'(\Fh)$ was defined in \eqref{eq:E-coefficients}.
\end{enumerate}
\end{subequations}

\begin{Theorem}\label{thm:relative-extremal-projector-sum}
The relative extremal projector $P_{\Fg,\Fl}$ can be expressed as
\begin{equation}\label{eq:Prel-expansion1}
P_{\Fg,\Fl} = 
1+
\sum_{\ell=1}^\infty\;
\sum_{(\be_1,\ldots,\be_\ell)\in(\De_+)^\ell}
P_{\Fg,\Fl}(\be_1,\ldots,\be_\ell) \sum_{i_j\in I_{\be_j}} f_{\be_1}^{i_1}\cdots f_{\be_\ell}^{i_\ell} e_{\be_\ell}^{i_\ell}\cdots e_{\be_1}^{i_1}\Big|_M.
\end{equation}
\end{Theorem}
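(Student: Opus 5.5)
Let $R$ denote the right-hand side of \eqref{eq:Prel-expansion1}. I will prove $R|_{M_{-\la}}=P_{\Fg,\Fl}|_{M_{-\la}}$ for every $\la\in Q_+$ by induction on $\la$ with respect to $\preceq$. First, $R$ is a well-defined operator on $M$. Each term of degree $\be_1+\cdots+\be_\ell$ kills $M_{-\la}$ unless $\be_1+\cdots+\be_\ell\preceq\la$ by \eqref{eq:support-property}, and the finiteness remark after Lemma~\ref{lem:E} gives finiteness on each vector. Group $R=\sum_{\mu\in Q_+}R[\mu]$, where $R[\mu]$ collects the terms with $\be_1+\cdots+\be_\ell=\mu$. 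Then $R|_{M_{-\la}}=\sum_{\mu\preceq\la}R[\mu]|_{M_{-\la}}$.

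The case $\la=0$ is immediate, since only $R[0]=1$ survives. For $\la\in Q_+(\Fl)$, property \eqref{eq:levi-cone-property} shows that any $\be_i\in\De_+$ with $\sum\be_i=\mu\preceq\la$ lie in $\De_+(\Fl)$. Indeed, $\la=\mu+(\la-\mu)$, and writing $\la-\mu$ as a sum of roots, all summands lie in $\De_+(\Fl)$. Then \eqref{eq:resolved-recursion-levi} gives $R|_{M_{-\la}}=\op{Id}$, as required.

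The main case is $\la\notin Q_+(\Fl)$, where we must show $R|_{M_{-\la}}=0$. The key idea is to split each term according to the tail. Substitute the recursion \eqref{eq:resolved-recursion} into $R[\la]$, where at least one $\be_i\notin\De_+(\Fl)$ since $\la\notin Q_+(\Fl)$. Reorganize the resulting double sum over $(\be_1,\ldots,\be_k)$ and $(\be_{k+1},\ldots,\be_\ell)$. Using \eqref{eq:shift} to pull $E(\be_{k+1},\ldots,\be_\ell)^{\be_1+\cdots+\be_k}$ to the middle past $f_{\be_1}\cdots f_{\be_k}$, we get
\begin{equation*}
R[\la]\big|_{M_{-\la}}=-\sum_{\nu\prec\la}\;\sum_{\substack{(\be_1,\ldots,\be_k)\\ \sum\be_i=\nu}}P_{\Fg,\Fl}(\be_1,\ldots,\be_k)\,f_{\be_1}\cdots f_{\be_k}\,E[\la-\nu]\,e_{\be_k}\cdots e_{\be_1}\Big|_{M_{-\la}}.
\end{equation*}
Here the constraint that some $\be_i\notin\De_+(\Fl)$ is automatic once the total is $\la$, and the $k=0$ term (with $P(\emptyset)=1$) is included.

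Since $e_{\be_k}\cdots e_{\be_1}$ maps $M_{-\la}$ into $M_{-(\la-\nu)}$ and $\la-\nu\neq0$, Lemma~\ref{lem:E} replaces $E[\la-\nu]$ by the identity. Hence $R[\la]|_{M_{-\la}}=-\sum_{\nu\prec\la}R[\nu]|_{M_{-\la}}$, that is, $R|_{M_{-\la}}=0$. Notably this step needs no induction hypothesis; the recursion is precisely engineered so that the top-degree piece cancels all lower pieces.

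The main obstacle is the bookkeeping in this reorganization. One must check that the sums over root-space indices $i_j$ factor correctly. One must also confirm that the shift exponent $\be_1+\cdots+\be_k$ matches \eqref{eq:shift} with the $f$'s on the left. Finally, one must verify that the index ranges agree exactly: every decomposition $(\be_1,\ldots,\be_\ell)$ of $\la$ splits uniquely as prefix plus nonempty tail, matching the sum $\sum_{k=0}^{\ell-1}$ in \eqref{eq:resolved-recursion}.
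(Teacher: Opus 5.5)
Your proof is correct and is essentially the paper's argument. The paper also truncates via the support property \eqref{eq:support-property} and handles $\la\in Q_+(\Fl)$ with the Levi-cone property. For $\la\notin Q_+(\Fl)$ it shows $\tilde P_{\Fg,\Fl}[\la]|_{M_{-\la}}=-\sum_{\mu\prec\la}\tilde P_{\Fg,\Fl}[\mu]|_{M_{-\la}}$ by inserting $E[\la-\mu]$ between the $f$'s and $e$'s and regrouping via \eqref{eq:shift} into the resolved recursion. You run the same chain of equalities in the opposite direction, and, as you yourself note, no induction on $\la$ is actually needed.
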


\begin{Remark}
In the case when the root spaces of $\Fg$ are one-dimensional, the sets $I_\be$ are singletons; the summation $\sum_{i_j\in I_{\be_j}}$ and all superscripts $i_j$ can be removed.
\end{Remark}

\begin{proof}
Let $\tilde P_{\Fg,\Fl}$ denote the operator in the RHS of \eqref{eq:Prel-expansion1}.
We will show that $\tilde P_{\Fg,\Fl}$ agrees with $P_{\Fg,\Fl}$ on each weight space $M_{-\la}$, $\la\in Q_+$. We write $\tilde P_{\Fg,\Fl}$ as the sum
\begin{equation}
	\tilde P_{\Fg,\Fl} = \sum_{\la\in Q_+} \tilde P_{\Fg,\Fl}[\la]\Big|_M
\end{equation}
where $\tilde P_{\Fg,\Fl}[0]=1$ and for $\la\succ 0$,
\begin{equation}\label{eq:resolved-proof-tilde-P-sum}
	\tilde P_{\Fg,\Fl}[\la] = \sum_{\ell=1}^\infty \;
	\sum_{\substack{(\be_1,\ldots,\be_\ell)\in (\De_+)^\ell \\ \be_1+\cdots+\be_\ell=\la}} P_{\Fg,\Fl}(\be_1,\ldots,\be_\ell) \sum_{i_j\in I_{\be_j}}
	f_{\be_1}^{i_1}\cdots f_{\be_\ell}^{i_\ell} e_{\be_\ell}^{i_\ell}\cdots e_{\be_1}^{i_1}.
\end{equation}
Observe that, by \eqref{eq:support-property}, for all $\la\succeq 0$ we have
\begin{equation}\label{eq:resolved-proof-truncation}
	\tilde P_{\Fg,\Fl}\Big|_{M_{-\la}} = \sum_{0\preceq\mu\preceq\la} \tilde P_{\Fg,\Fl}[\mu]\Big|_{M_{-\la}}.
\end{equation}
First, suppose that $\la\in Q_+(\Fl)$. 
By \eqref{eq:levi-cone-property}, $\be_1+\cdots+\be_\ell\preceq\la$ implies that
$\be_i\in Q_+(\Fl)$ for all $i$. By \eqref{eq:resolved-recursion-levi}, $P_{\Fg,\Fl}(\be_1,\ldots,\be_\ell)=0$ if $\ell>0$. Therefore,
\begin{equation}
	\tilde P_{\Fg,\Fl}\big|_{M_{-\la}} = \tilde P_{\Fg,\Fl}[0]\big|_{M_{-\la}}=\op{Id}_{M_{-\la}}
	\overset{\eqref{eq:relative-extremal-projector-operator-def}}{=} P_{\Fg,\Fl}\big|_{M_{-\la}}.
\end{equation}
It remains to consider the case $\la\in Q_+\setminus Q_+(\Fl)$. By \eqref{eq:relative-extremal-projector-operator-def} and \eqref{eq:resolved-proof-truncation} it suffices to show that 
$\sum_{0\preceq\mu\preceq\la} \tilde P_{\Fg,\Fl}[\mu]\Big|_{M_{-\la}}=0$,
or equivalently, that
\begin{equation}
	\tilde P_{\Fg,\Fl}[\la]\Big|_{M_{-\la}}=-\sum_{0\preceq\mu\prec\la} \tilde P_{\Fg,\Fl}[\mu]\Big|_{M_{-\la}}.
\end{equation}
We insert $\op{Id}_{M_{-(\la-\mu)}}$ between the $f$'s and the $e$'s in the definition \eqref{eq:resolved-proof-tilde-P-sum} of $\tilde P_{\Fg,\Fl}[\mu]$, and use the resolution of the identity, Lemma~\ref{lem:E}. This gives that, as operators on $M_{-\la}$, and suppressing the sums $\sum_{i_j\in I_{\be_j}}$ for brevity:
\begin{align*}
&-\sum_{0\preceq\mu\prec\la} \tilde P_{\Fg,\Fl}[\mu]\\ 
&=-\sum_{0\preceq\mu\prec\la} \sum_{\substack{(\be_1,\ldots,\be_k)\in(\De_+)^k\\ \be_1+\cdots+\be_k=\mu}}
P_{\Fg,\Fl}(\be_1,\ldots,\be_k)f_{\be_1}^{i_1}\cdots f_{\be_k}^{i_k} E[\la-\mu]e_{\be_k}^{i_k}\cdots e_{\be_1}^{i_1} \\ 
&=-\sum_{0\preceq\mu\prec\la} \sum_{\substack{(\be_1,\ldots,\be_\ell)\in(\De_+)^\ell\\ \be_1+\cdots+\be_k=\mu\\\be_{k+1}+\cdots+\be_\ell=\la-\mu}}
P_{\Fg,\Fl}(\be_1,\ldots,\be_k)f_{\be_1}^{i_1}\cdots f_{\be_k}^{i_k}
E(\be_{k+1},\ldots,\be_\ell)f_{\be_{k+1}}^{i_{k+1}}\cdots f_{\be_\ell}^{i_\ell}e_{\be_\ell}^{i_\ell}\cdots e_{\be_1}^{i_1} \\ 
&=\sum_{\ell=1}^\infty\sum_{\substack{(\be_1,\ldots,\be_\ell)\in(\De_+)^\ell\\ \be_1+\cdots+\be_\ell=\la}} \Big(-\sum_{k=0}^{\ell-1} P_{\Fg,\Fl}(\be_1,\ldots,\be_k)E(\be_{k+1},\ldots,\be_\ell)^{\be_1+\cdots+\be_k}\Big)f_{\be_1}^{i_1}\cdots f_{\be_\ell}^{i_\ell}e_{\be_\ell}^{i_\ell}\cdots e_{\be_1}^{i_1}.
\end{align*}
By \eqref{eq:resolved-proof-tilde-P-sum} and the recursion \eqref{eq:resolved-recursion}, this equals $P[\mu]\Big|_{M_{-\la}}$.
\end{proof}

\section{The Relative ABRR Recursion}\label{sec:the_relative_abrr_recursion}

Relative extremal projectors were introduced in \cite{CS03}.
In this section we provide a generalization of the ABRR recursion \cite{ABRR,Khoroshkin2004} to the case of relative extremal projectors.

The rational functions $\psi_\la$ defined in \eqref{eq:psi-def} satisfy the following key identity:
\begin{Lemma}[Cocycle Identity for $\psi_\la$]
\begin{equation}\label{eq:cocycle}
	(\psi_\la-\psi_{\la+\mu})\psi_\mu^\la = \psi_\la\psi_{\la+\mu},\qquad\forall \la,\mu\in Q_{\succ 0}.
\end{equation}
\end{Lemma}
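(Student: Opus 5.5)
This is a direct computation from the definition \eqref{eq:psi-def}: the idea is to invert everything and reduce the identity to a telescoping difference. Fix $\la,\mu\in Q_{\succ 0}$. By definition,
\[
\psi_\la = \frac{1}{\Omega_0^\la-\Omega_0},\qquad
\psi_{\la+\mu}=\frac{1}{\Omega_0^{\la+\mu}-\Omega_0},\qquad
\psi_\mu^\la=\frac{1}{\Omega_0^{\la+\mu}-\Omega_0^\la}.
\]
The last formula follows because $(\cdot)^\la$ is an algebra automorphism of $U'(\Fh)$ with $(\Omega_0^\mu)^\la=\Omega_0^{\la+\mu}$. That composition rule holds since shifts compose additively on generators $h\in\Fh$: $(h^\mu)^\la=h+\mu(h)+\la(h)$. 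All three denominators are nonzero, by Assumption \eqref{it:Omega0} and the remark preceding \eqref{eq:psi-def}; for the third, apply the automorphism $(\cdot)^\la$ to $\Omega_0^\mu-\Omega_0\neq 0$.

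To avoid clutter, set $a=\Omega_0$, $b=\Omega_0^\la$, $c=\Omega_0^{\la+\mu}$. These are elements of the commutative field $U'(\Fh)$, and $b-a$, $c-a$, $c-b$ are all nonzero. The claimed identity \eqref{eq:cocycle} then reads
\[
\Big(\frac{1}{b-a}-\frac{1}{c-a}\Big)\frac{1}{c-b}=\frac{1}{(b-a)(c-a)}.
\]
The left-hand bracket equals $\frac{(c-a)-(b-a)}{(b-a)(c-a)}=\frac{c-b}{(b-a)(c-a)}$. Multiplying by $\frac{1}{c-b}$ cancels the factor $c-b$ and gives exactly the right-hand side.

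There is no genuine obstacle here. The only points needing care are the justification that $\psi_\mu^\la$ has denominator $\Omega_0^{\la+\mu}-\Omega_0^\la$, which comes from the composition rule for shifts, and the nonvanishing of the three denominators. Commutativity of $U'(\Fh)$ allows free rearrangement of the factors.
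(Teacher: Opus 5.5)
Your proof is correct and is the same direct computation the paper gives: put $\psi_\la-\psi_{\la+\mu}$ over the common denominator $(\Omega_0^\la-\Omega_0)(\Omega_0^{\la+\mu}-\Omega_0)$ and recognize the numerator $\Omega_0^{\la+\mu}-\Omega_0^\la$ as $1/\psi_\mu^\la$. Your justifications that shifts compose additively and that all three denominators are nonzero fill in details the paper leaves implicit.
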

\begin{proof}
Straightforward:
\[
\psi_\la-\psi_{\la+\mu} 
= \frac{1}{\Omega_0^\la-\Omega_0}-\frac{1}{\Omega_0^{\la+\mu}-\Omega_0} 
= \frac{\Omega_0^{\la+\mu}-\Omega_0^\la}{(\Omega_0^\la-\Omega_0)(\Omega_0^{\la+\mu}-\Omega_0)} = \frac{\psi_\la\psi_{\la+\mu}}{\psi^\la_\mu}.
\]
\end{proof}

For $\be\in Q_+$, we use the following generalized Kronecker delta notation:
\begin{equation}
\delta_{\be\in Q_+(\Fl)}=
\begin{cases}
1,&\be\in Q_+(\Fl), \\ 
0,&\text{otherwise}.
\end{cases}
\end{equation}

The following is the second main theorem of the paper:

\begin{Theorem}\label{thm:relative-ABRR}
The rational functions $P_{\Fg,\Fl}(\be_1,\be_2,\ldots,\be_\ell)$ defined in 
\eqref{eq:P-coefficients-def}, satisfy
\begin{align}\label{eq:relative-ABRR-initial0}
P_{\Fg,\Fl}(\emptyset)&=1,\\
\label{eq:relative-ABRR-initial1}
P_{\Fg,\Fl}(\beta)&=(-1+\delta_{\be\in Q_+(\Fl)})\psi_\be,\qquad \forall\be\in Q_{\succ 0}, \\ 
\intertext{and for all integers $\ell\ge 2$ and all $\be_1,\ldots,\be_\ell\in Q_{\succ 0}$:}
\label{eq:relative-ABRR-recursion}
	P_{\Fg,\Fl}(\be_1,\ldots,\be_\ell)&=
	-\psi_{\be_1}P_{\Fg,\Fl}(\be_1+\be_2,\be_3,\ldots,\be_\ell)
	+\delta_{\be_1\in Q_+(\Fl)}\psi_{\be_1}P_{\Fg,\Fl}(\be_2,\ldots,\be_\ell)^{\be_1}.
\end{align}
\end{Theorem}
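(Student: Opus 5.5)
The plan is to derive \eqref{eq:relative-ABRR-recursion} from the resolved recursion \eqref{eq:P-coefficients-def}, by strong induction on $\ell$. The algebraic engine is an identity for the resolution-of-identity coefficients \eqref{eq:E-coefficients}. Put $S=\be_1+\cdots+\be_\ell$ and set $E(\emptyset)=1$.

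\emph{Step 1: a reduction formula for $E$.} Directly from \eqref{eq:E-coefficients} we have
\[
E(\be_1,\ldots,\be_\ell)=\psi_S\,E(\be_2,\ldots,\be_\ell)^{\be_1},\qquad
E(\be_2,\ldots,\be_\ell)^{\be_1}=\psi^{\be_1}_{S-\be_1}E(\be_3,\ldots,\be_\ell)^{\be_1+\be_2}.
\]
We also have
\[
E(\be_1+\be_2,\be_3,\ldots,\be_\ell)=\psi_S E(\be_3,\ldots,\be_\ell)^{\be_1+\be_2}.
\]
Combining these with the cocycle identity \eqref{eq:cocycle}, taken with $\la=\be_1$ and $\mu=S-\be_1$, gives for $\ell\ge2$
\[
E(\be_1,\ldots,\be_\ell)=\psi_{\be_1}\Big(E(\be_2,\ldots,\be_\ell)^{\be_1}-E(\be_1+\be_2,\be_3,\ldots,\be_\ell)\Big).
\]
For $\ell=1$ the same computation gives $E(\be)=\psi_\be$.

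\emph{Step 2: initial conditions.} The identity \eqref{eq:relative-ABRR-initial0} is \eqref{eq:resolved-recursion-0}. For $\ell=1$ there are two cases. If $\be\in Q_+(\Fl)$, then \eqref{eq:resolved-recursion-levi} gives $0$. Otherwise \eqref{eq:resolved-recursion} gives $-E(\be)=-\psi_\be$. Both cases agree with \eqref{eq:relative-ABRR-initial1}.

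\emph{Step 3: the recursion for $\ell\ge2$.} It is convenient to rewrite the resolved recursion in convolution form. Whenever some $\be_i\notin Q_+(\Fl)$,
\[
\sum_{k=0}^{\ell}P_{\Fg,\Fl}(\be_1,\ldots,\be_k)E(\be_{k+1},\ldots,\be_\ell)^{\be_1+\cdots+\be_k}=0.
\]
When all $\be_i\in Q_+(\Fl)$, the same sum equals $E(\be_1,\ldots,\be_\ell)$ instead, since only the $k=0$ term survives.

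If all $\be_i\in Q_+(\Fl)$, then $\be_1+\be_2\in Q_+(\Fl)$ as well. Both terms on the right of \eqref{eq:relative-ABRR-recursion} therefore vanish by \eqref{eq:resolved-recursion-levi}, and so does the left side.

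Otherwise, expand $P_{\Fg,\Fl}(\be_1,\ldots,\be_\ell)$ by \eqref{eq:resolved-recursion} and split off the $k=0$ term $-E(\be_1,\ldots,\be_\ell)$, rewriting it with Step 1. In the terms with $1\le k\le\ell-1$, apply the induction hypothesis to $P_{\Fg,\Fl}(\be_1,\ldots,\be_k)$ for $k\ge2$, and the initial condition for $k=1$. This replaces each such factor by
\[
-\psi_{\be_1}P_{\Fg,\Fl}(\be_1+\be_2,\ldots,\be_k)+\delta_{\be_1\in Q_+(\Fl)}\psi_{\be_1}P_{\Fg,\Fl}(\be_2,\ldots,\be_k)^{\be_1}.
\]
Since the shifts only depend on partial sums, the result regroups into two convolution sums:
\begin{enumerate}[(a)]
\item one over the sequence $(\be_1+\be_2,\be_3,\ldots,\be_\ell)$, multiplied by $-\psi_{\be_1}$;
\item one over $(\be_2,\ldots,\be_\ell)$, shifted by $\be_1$ and multiplied by $\delta_{\be_1\in Q_+(\Fl)}\psi_{\be_1}$.
\end{enumerate}
Each of these sums equals, by the resolved recursion for the shorter sequence, the corresponding $P_{\Fg,\Fl}$ value, up to a boundary term of $E$-type. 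The two pieces of $E(\be_1,\ldots,\be_\ell)$ produced in Step 1 are exactly what is needed to complete these partial sums, so everything cancels to the right side of \eqref{eq:relative-ABRR-recursion}.

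\emph{Main obstacle.} I expect the hard part to be the Levi bookkeeping in Step 3. The convolution identity for the shorter sequences holds only when those sequences still contain a non-Levi entry. For example, if $\be_1\notin Q_+(\Fl)$ and $\be_2,\ldots,\be_\ell\in Q_+(\Fl)$, then $(\be_2,\ldots,\be_\ell)$ falls under clause \eqref{eq:resolved-recursion-levi}, where the convolution sum equals $E$ rather than $0$. In that case the $\delta_{\be_1\in Q_+(\Fl)}$ factor is $0$, which is exactly what kills the mismatch. Conversely, when $\be_1\in Q_+(\Fl)$ the $\delta$-term is needed precisely to absorb the extra $\psi_{\be_1}E(\be_2,\ldots)^{\be_1}$ coming from Step 1. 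I would organize the argument by the four cases according to whether $\be_1\in Q_+(\Fl)$ and whether $(\be_2,\ldots,\be_\ell)\subseteq Q_+(\Fl)$, using \eqref{eq:levi-cone-property} to decide membership of $\be_1+\be_2$. In each case I would check that the boundary terms match the two pieces of the Step 1 identity.
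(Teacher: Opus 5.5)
Your proposal is correct and is essentially the paper's proof. Both argue by induction on $\ell$, split off the $k=0$ term of the resolved recursion, and use the cocycle identity for $E$ (your Step 1 is the paper's \eqref{eq:cocycle-E}). Both then regroup the rest into the resolved recursions for $(\be_1+\be_2,\be_3,\ldots,\be_\ell)$ and for $(\be_2,\ldots,\be_\ell)$ shifted by $\be_1$, and an all-Levi tail is neutralized by $\delta_{\be_1\in Q_+(\Fl)}=0$.

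One attribution in your last paragraph is slightly off, though harmless. The Step 1 piece $-\psi_{\be_1}E(\be_2,\ldots,\be_\ell)^{\be_1}$ cancels the non-$\delta$ part of the $k=1$ term, whether or not $\be_1\in Q_+(\Fl)$. The $\delta$ part of that $k=1$ term is what supplies the $k=0$ term of the shifted $(\be_2,\ldots,\be_\ell)$ sum.
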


\begin{proof}
By \eqref{eq:P-coefficients-def}, the case $\ell=0$ is trivial.
We will prove \eqref{eq:relative-ABRR-recursion} by induction on $\ell\ge 1$, regarding \eqref{eq:relative-ABRR-initial1} as the correct interpretation when $\ell=1$.
For $\ell=1$ and $\be_1=\be$ we have
\begin{equation}
	P_{\Fg,\Fl}(\be) =
	\begin{cases}
	0,& \be\in Q_+(\Fl), \\ 
	-E(\be),& \be\notin Q_+(\Fl).
	\end{cases}
\end{equation}
By \eqref{eq:E-coefficients}, $E(\be)=\psi_\be$. Therefore,
\begin{equation}
	P_{\Fg,\Fl}(\be)=(-1+\delta_{\be\in Q_+(\Fl)})\psi_\be.
\end{equation}
Now suppose $\ell>1$.
Before proceeding, note that if $\be_i$ belong to the levi cone $Q_+(\Fl)$ for all $i=1,2,\ldots,\ell$, then both sides of \eqref{eq:relative-ABRR-recursion} are zero, by \eqref{eq:resolved-recursion-levi}. In the remainder of the proof, we may therefore assume that at least one $\be_i$ is not in $Q_+(\Fl)$. By the resolved recursion \eqref{eq:resolved-recursion}, we then have
\begin{equation}\label{eq:rel_rec2_proof-1}
P_{\Fg,\Fl}(\be_1,\ldots,\be_\ell)
=-\sum_{k=0}^{\ell-1}P_{\Fg,\Fl}(\be_1,\ldots,\be_k)E(\be_{k+1},\ldots,\be_\ell)^{\be_1+\cdots+\be_k}.
\end{equation}
Separating out the $k=0$ term, and using the induction hypothesis in the remaining terms, the right hand side of \eqref{eq:rel_rec2_proof-1} equals
\begin{align*}
&-E(\be_1,\ldots,\be_\ell) \\ 
&-\sum_{k=1}^{\ell-1}\Big(-\psi_{\be_1}P_{\Fg,\Fl}(\be_1+\be_2,\be_3,\ldots,\be_k)+\delta_{\be_1\in Q_+(\Fl)}\psi_{\be_1}P_{\Fg,\Fl}(\be_2,\ldots,\be_k)^{\be_1}\Big)\cdot \\
&\qquad\qquad \cdot E(\be_{k+1},\ldots,\be_\ell)^{\be_1+\cdots+\be_k},
\end{align*}
where $P_{\Fg,\Fl}(\cdots)$ should both be interpreted as $1$ when $k=1$.
Rearranging terms, we write this as
\begin{align} 
&-E(\be_1,\ldots,\be_\ell) + \psi_{\be_1}E(\be_2,\ldots,\be_\ell)^{\be_1}\label{term1}\\ 
&+\psi_{\be_1}\sum_{k=2}^{\ell-1} P_{\Fg,\Fl}(\be_1+\be_2,\be_3,\ldots,\be_k) E(\be_{k+1},\ldots,\be_\ell)^{\be_1+\cdots+\be_k} \label{term2} \\ 
&+\delta_{\be_1\in Q_+(\Fl)}\psi_{\be_1}\Big[-\sum_{k=1}^{\ell-1}
P_{\Fg,\Fl}(\be_2,\ldots,\be_k)
\cdot E(\be_{k+1},\ldots,\be_\ell)^{\be_2+\cdots+\be_k}\Big]^{\be_1}.
\label{term3}
\end{align}
The two terms in \eqref{term1} can combined, using the cocycle identity \eqref{eq:cocycle} and \eqref{eq:E-coefficients}:
\begin{equation}\label{eq:cocycle-E}
	-E(\be_1,\be_2,\ldots,\be_\ell)+\psi_{\be_1}E(\be_2,\ldots,\be_\ell)^{\be_1}=\psi_{\be_1}E(\be_1+\be_2,\be_3,\ldots,\be_\ell).
\end{equation}
Adding this to \eqref{term2}, and using that at least one of $(\be_1+\be_2), \be_3, \ldots,\be_\ell$ is not in $Q_+(\Fl)$ (if $\be_1+\be_2$ is in $Q_+(\Fl)$ then both $\be_1$ and $\be_2$ is in $Q_+(\Fl)$), we can use the resolved recursion \eqref{eq:resolved-recursion} and obtain
\begin{equation}\label{eq:term1plusterm2}
\eqref{term1}+\eqref{term2}=-\psi_{\be_1}P_{\Fg,\Fl}(\be_1+\be_2,\be_3,\ldots,\be_\ell).
\end{equation}
Lastly, in \eqref{term3}, if there is an $i\in\{2,3,\ldots,\ell\}$ such that $\be_i\notin Q_+(\Fl)$, we may use the resolved recursion \eqref{eq:resolved-recursion} to conclude
\begin{equation}\label{eq:term3-equality}
	\eqref{term3}=\delta_{\be_1\in Q_+(\Fl)}\psi_{\be_1}P_{\Fg,\Fl}(\be_2,\be_3,\ldots,\be_\ell)^{\be_1}.
\end{equation}
If on the other hand $\be_i\in Q_+(\Fl)$ for $i=2,3,\ldots,\ell$, then $\be_1$ must be the one that is not in $Q_+(\Fl)$. But then both sides of \eqref{eq:term3-equality} are zero, so equality holds in this case too.

Adding \eqref{eq:term1plusterm2} to \eqref{eq:term3-equality}, we obtain the right hand side of \eqref{eq:relative-ABRR-recursion}.
\end{proof}

\section{An Explicit Formula for the Coefficients}\label{sec:an_explicit_formula_for_the_coefficients}
\begin{Theorem}\label{thm:explicit}
Let $\ell$ be a positive integer, and let $\be_1,\be_2,\ldots,\be_\ell\in Q_{\succ 0}$.
If all $\be_i\in Q_+(\Fl)$ then $P_{\Fg,\Fl}(\be_1,\ldots,\be_\ell)=0$.
Suppose that $\be_1,\be_2,\ldots,\be_{m-1}\in Q_+(\Fl)$, while $\be_m\notin Q_+(\Fl)$, where $m\in\{1,2,\ldots,\ell\}$. Then
\begin{align}\label{eq:explicit-solution}
P_{\Fg,\Fl}(\be_1,\ldots,\be_\ell) &=\sum_{k=0}^{m-1} E(\be_1,\ldots,\be_k)P_\Fg(\be_{k+1},\ldots,\be_\ell)^{\be_1+\cdots+\be_k} \\ 
&=\sum_{k=0}^{m-1}(-1)^{\ell-k} \psi_{\be_1+\cdots+\be_k}\psi_{\be_2+\cdots+\be_k}^{\be_1}\cdots\psi_{\be_k}^{\be_1+\cdots+\be_{k-1}} \nonumber \\
& \qquad\cdot \psi_{\be_{k+1}}^{\be_1+\cdots+\be_k}\psi_{\be_{k+1}+\be_{k+2}}^{\be_1+\cdots+\be_k}\cdots \psi_{\be_{k+1}+\cdots+\be_\ell}^{\be_1+\cdots+\be_k}.
\label{eq:explicit-solution2}
\end{align}
\end{Theorem}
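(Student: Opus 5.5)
The vanishing statement when all $\be_i\in Q_+(\Fl)$ is the defining clause \eqref{eq:resolved-recursion-levi}, so the content is the formula \eqref{eq:explicit-solution}. The proof will be by induction on $m$, using the relative ABRR recursion of Theorem~\ref{thm:relative-ABRR}. Here $P_\Fg(\cdots)$ denotes the non-relative coefficients, i.e.\ the case $\Fl=\Fh$, for which $Q_+(\Fh)=\{0\}$ and the $\delta$-term always vanishes.

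\emph{Step 1 (base case $m=1$).} We first record the non-relative solution. Iterating \eqref{eq:relative-ABRR-recursion} with $\Fl=\Fh$ from the initial condition $P_\Fg(\be)=-\psi_\be$ gives
\[
P_\Fg(\be_1,\ldots,\be_\ell)=(-1)^\ell\psi_{\be_1}\psi_{\be_1+\be_2}\cdots\psi_{\be_1+\cdots+\be_\ell},
\]
by a trivial induction on $\ell$. Now suppose $\be_1\notin Q_+(\Fl)$. Then $\be_1+\be_2\notin Q_+(\Fl)$ by \eqref{eq:levi-cone-property}, and more generally every partial sum $\be_1+\cdots+\be_j\notin Q_+(\Fl)$. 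So at every stage of the relative recursion the $\delta$-term vanishes, and the recursion for $P_{\Fg,\Fl}$ coincides with that for $P_\Fg$. An induction on $\ell$ (with the initial condition \eqref{eq:relative-ABRR-initial1} when $\ell=1$) gives $P_{\Fg,\Fl}(\be_1,\ldots,\be_\ell)=P_\Fg(\be_1,\ldots,\be_\ell)$, which is the $k=0$ term.

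\emph{Step 2 (inductive step, $m\ge2$).} Now $\be_1\in Q_+(\Fl)$, so \eqref{eq:relative-ABRR-recursion} reads
\[
P_{\Fg,\Fl}(\be_1,\ldots,\be_\ell)=-\psi_{\be_1}P_{\Fg,\Fl}(\be_1+\be_2,\be_3,\ldots,\be_\ell)+\psi_{\be_1}P_{\Fg,\Fl}(\be_2,\ldots,\be_\ell)^{\be_1}.
\]

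\emph{The second term.} The sequence $(\be_2,\ldots,\be_\ell)$ has first non-Levi index $m-1$. By induction it equals
\[
\sum_{j=0}^{m-2}E(\be_2,\ldots,\be_{j+1})\,P_\Fg(\be_{j+2},\ldots,\be_\ell)^{\be_2+\cdots+\be_{j+1}}.
\]
After shifting by $\be_1$ and multiplying by $\psi_{\be_1}$, each summand has the factor $\psi_{\be_1}E(\be_2,\ldots,\be_k)^{\be_1}$, where $k=j+1$ runs from $1$ to $m-1$.

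\emph{The first term.} The sequence $(\be_1+\be_2,\be_3,\ldots,\be_\ell)$ has length $\ell-1$. Its first non-Levi index is $m-1$ when $m\ge3$; when $m=2$, the sum $\be_1+\be_2$ is itself non-Levi, so the index is $1$. Handling both cases uniformly, induction (on $\ell$, jointly with $m$) gives a sum over $k=2,\ldots,m-1$ with factors $-\psi_{\be_1}E(\be_1+\be_2,\be_3,\ldots,\be_k)$, plus a $k=1$ term $-\psi_{\be_1}P_\Fg(\be_1+\be_2,\ldots,\be_\ell)$. The $P_\Fg$-factors carry the same shifts $\be_1+\cdots+\be_k$ as in the second term.

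\emph{Combining.} For $2\le k\le m-1$ we combine the matching terms using \eqref{eq:cocycle-E}, in the form
\[
\psi_{\be_1}E(\be_2,\ldots,\be_k)^{\be_1}-\psi_{\be_1}E(\be_1+\be_2,\ldots,\be_k)=E(\be_1,\ldots,\be_k).
\]
This produces exactly the $k$-th summand of \eqref{eq:explicit-solution}. For $k=1$, the second term contributes $\psi_{\be_1}P_\Fg(\be_2,\ldots,\be_\ell)^{\be_1}=E(\be_1)P_\Fg(\be_2,\ldots,\be_\ell)^{\be_1}$, which is the $k=1$ summand. What is left over is $-\psi_{\be_1}P_\Fg(\be_1+\be_2,\be_3,\ldots,\be_\ell)$. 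By the non-relative ABRR recursion this equals $P_\Fg(\be_1,\ldots,\be_\ell)$, the $k=0$ summand.

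\emph{Step 3 (the second form).} The expanded formula \eqref{eq:explicit-solution2} follows by substituting \eqref{eq:E-coefficients} for $E$ and the closed form of $P_\Fg$ from Step~1, then applying the shift.

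\emph{Main obstacle.} The delicate point is the bookkeeping in Step~2. One must match the summation ranges of the two induction outputs, including the edge case $m=2$, where the first term falls under the base case, and the $k=1$ term, where $E$ of the empty sequence is $1$. One must also verify that the shifts line up, namely that $(\be_1+\be_2)+\be_3+\cdots+\be_k=\be_1+\cdots+\be_k$. I would write the induction hypothesis uniformly, allowing $m-1=0$, to handle the edge cases cleanly.
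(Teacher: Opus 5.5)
Your proof is correct and follows the same route as the paper. Both arguments get the base case $m=1$ by reducing to the non-relative ABRR recursion. For $\be_1\in Q_+(\Fl)$, both apply \eqref{eq:relative-ABRR-recursion}, use the induction hypothesis on the two resulting terms, and merge them with \eqref{eq:cocycle-E}. Your explicit handling of the $m=2$ case and of the leftover term $-\psi_{\be_1}P_\Fg(\be_1+\be_2,\be_3,\ldots,\be_\ell)=P_\Fg(\be_1,\ldots,\be_\ell)$ is cleaner than the paper's indexing, which glosses over these edge cases.
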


\begin{proof}
The first part is just a restatement of \eqref{eq:resolved-recursion-levi}. Therefore we will assume that at least one $\be_i$ is not in $Q_+(\Fl)$.
First, we consider the case when $m=1$, meaning $\be_1\notin Q_+(\Fl)$. Then the right hand side just equals $P_\Fg(\be_1,\ldots,\be_\ell)$ which coincides with $P_{\Fg,\Fl}(\be_1,\ldots,\be_\ell)$ when $\be\notin Q_+(\Fl)$, by repeated use of the ABRR recursion \eqref{eq:relative-ABRR-recursion}.
In the rest of the proof we may therefore assume that $\be_1\in Q_+(\Fl)$, and $\ell \ge 2$.

We proceed by induction on $\ell$. By the relative ABRR recursion \eqref{eq:relative-ABRR-recursion}, we have
\begin{equation}\label{eq:solution_pf1}
	P_{\Fg,\Fl}(\be_1,\ldots,\be_\ell) = -\psi_{\be_1} P_{\Fg,\Fl}(\be_1+\be_2,\ldots,\be_\ell)+\psi_{\be_1} P_{\Fg,\Fl}(\be_2,\ldots,\be_\ell)^{\be_1}.
\end{equation}
Since $\be_1,\be_2,\ldots,\be_{m-1}\in Q_+(\Fl)$, the same is true for $\be_1+\be_2$.
By the induction hypothesis, the RHS of \eqref{eq:solution_pf1} thus equals
\begin{multline}
-\psi_{\be_1}\sum_{k=1}^{m-1} E(\be_1+\be_2,\ldots,\be_k)P(\be_{k+1},\ldots,\be_\ell)^{\be_1+\cdots+\be_k} \\ 
+\psi_{\be-1}\sum_{k=1}^{m-1} E(\be_2,\ldots,\be_k)^{\be_1}P(\be_{k+1},\ldots,\be_\ell)^{\be_1+\be_2+\cdots+\be_k}
\end{multline}
By the cocycle identity \eqref{eq:cocycle-E}, this equals
\begin{equation}
	\sum_{k=0}^{m-1} E(\be_1,\ldots,\be_k)P(\be_{k+1},\ldots,\be_\ell)^{\be_1+\cdots+\be_k}.
\end{equation}
This proves \eqref{eq:explicit-solution}. The second equality \eqref{eq:explicit-solution2} follows from \eqref{eq:E-coefficients} and the formula for $P_\Fg(\be_1,\ldots,\be_\ell)$ obtained by repeated use of the relative ABRR recursion \eqref{eq:relative-ABRR-recursion} (the delta term is always zero for $\Fl=\Fh$). (See also Section \ref{sec:non-relative-case}.)
\end{proof}

\begin{Remark}\label{rem:alternative-route-to-solution}
The way we discovered the solution \eqref{eq:explicit-solution} was through a different route, inspired by \cite[Thm.~8]{CS03}. It relies on already knowing that the usual extremal projector $P_\Fg$ can be given in the form \eqref{eq:Prel-expansion1}. Although this was known \cite{Khoroshkin2004} (at least in the semisimple case), we chose to follow a self-contained logic in our main arguments above. Nevertheless, the alternative path is also interesting and therefore we present it here:

We go back to the ``resolution of the identity on $M_{-\la}$'' (let us for the sake of readability omit the $\sum_{i_j\in I_\be}$ and superscripts $i_j$):
\begin{equation}
	E[\la]\overset{\eqref{eq:E-formula1}}{=}\sum_{\ell=1}^\infty \sum_{\substack{(\be_1,\ldots,\be_\ell)\in(\De_+)^\ell\\\be_1+\cdots+\be_\ell=\la}}E(\be_1,\ldots,\be_\ell) f_{\be_1}\cdots f_{\be_\ell}e_{\be_\ell}\cdots e_{\be_1}.
\end{equation}
The property we proved was that it is the identity on $M_{-\la}$. Following the idea in \cite[Lem.~3]{CS03},
and also \cite[Eq. (24)]{ForQuiSki2025} (where the extremal projector is denoted denoted $|h\rangle \langle h|$),
we may insert the (non-relative) extremal projector $P_\Fg$ in the middle, and then sum over all $\la$, and include the constant term $1$. The result is now the identity on all of $M$. So we obtain the \emph{resolution of the identity on $M$}:
\begin{equation}
	1 = \sum E(\be_1,\ldots,\be_\ell) f_{\be_1}\cdots f_{\be_\ell} P_\Fg e_{\be_\ell}\cdots e_{\be_1}.
\end{equation}
where we sum over all finite sequences of positive roots $\be_i\in \De_+$, including the empty sequence for which the term is just $P_\Fg$. Now, replace $\Fg$ by $\Fl$ in this formula. It is then a sum over sequences of $\be_i\in \De_+(\Fl)$ and has a $P_\Fl$ in the middle. Then, as in \cite[Thm.~8]{CS03}, multiply both sides by $P_{\Fg,\Fl}$, and use that it commutes with every operator from $U(\Fl)$, and that $P_{\Fg,\Fl}P_{\Fl}=P_\Fg$, and we obtain:
\begin{equation}\label{eq:another-formula-for-Pgl}
	P_{\Fg,\Fl} = \sum E(\be_1,\ldots,\be_k) f_{\be_1}\cdots f_{\be_k} P_\Fg e_{\be_k}\cdots e_{\be_1}
\end{equation}
where the sum is over all finite sequences of positive roots for $\Fl$.
We remark that \eqref{eq:another-formula-for-Pgl} is a direct generalization of \cite[Cor.~1]{CS03}.
Now use the explicit ABRR-type form of the (non-relative) extremal projector $P_\Fg$ first obtained in \cite{Khoroshkin2004}:
\begin{equation}
	P_\Fg = \sum P_\Fg(\be_1,\ldots,\be_\ell) f_{\be_1}\cdots f_{\be_\ell}e_{\be_\ell}\cdots e_{\be_1}
\end{equation}
summing over all sequences of positive roots. Then we get
\begin{equation}
	P_{\Fg,\Fl}=\sum E(\be_1,\ldots,\be_k)f_{\be_1}\ldots f_{\be_k} P(\be_{k+1},\ldots,\be_{\ell})f_{\be_{k+1}}\cdots f_{\be_\ell}e_{\be_\ell}\cdots e_{\be_1}.
\end{equation}
Here the sum is over all finite sequences $(\be_1,\ldots,\be_k)\in(\De_+(\Fl))^k$ and finite sequences $(\be_{k+1},\ldots,\be_\ell)\in (\De_+)^{\ell-k}$. We can move the $P$ coefficients to the left if we apply a shift. Then we can collect the terms corresponding to a sequence $(\be_1,\ldots,\be_\ell)\in (\De_+)^\ell$. The result is that
\begin{equation}
	P_{\Fg,\Fl}(\be_1,\ldots,\be_\ell)=\sum_{k=0}^{m-1} E(\be_1,\ldots,\be_k)P_\Fg(\be_{k+1},\ldots,\be_\ell)^{\be_1+\cdots+\be_k}
\end{equation}
if $\be_1,\ldots,\be_{m-1}\in\De_+(\Fl)$ and $\be_m\notin\De_+(\ell)$. This agrees with \eqref{eq:explicit-solution}.
\end{Remark}

\section{Examples} \label{sec:examples}
\subsection{Explicit formulas for some sequences}
Let $\ell\in\mathbb{Z}_{\ge 1}$ and $\be_1,\ldots,\be_\ell\in Q_{\succ 0}$.
\begin{enumerate}[{\rm 1.}]
\item If $\be_1\notin Q_+(\Fl)$ then $\be_1+\cdots+\be_k\notin Q_+(\Fl)$ for all $k$, so using \eqref{eq:relative-ABRR-recursion} repeatedly,
\begin{align}
P_{\Fg,\Fl}(\be_1,\ldots,\be_\ell) \label{eq:when-relative-equals-non-relative}
&=(-1)^\ell \psi_{\be_1}\psi_{\be_1+\be_2}\cdots\psi_{\be_1+\be_2+\cdots+\be_\ell}\\
&=P_\Fg(\be_1,\ldots,\be_\ell).
\end{align}
\item If $\be_1\in Q_+(\Fl), \be_2\notin Q_+(\Fl)$, then by \eqref{eq:relative-ABRR-recursion} and the previous case we obtain
\begin{align}
	&P_{\Fg,\Fl}(\be_1,\be_2,\ldots,\be_\ell) =\psi_{\be_1}P_\Fg(\be_2,\ldots,\be_\ell)^{\be_1}
	-\psi_{\be_1}P_\Fg(\be_1+\be_2,\be_3,\ldots,\be_\ell) \\  
	&=(-1)^{\ell-1}\psi_{\be_1}\psi_{\be_2}^{\be_1}\psi_{\be_2+\be_3}^{\be_1}\cdots \psi_{\be_2+\be_3+\cdots+\be_\ell}^{\be_1}
	+(-1)^{\ell}\psi_{\be_1}\psi_{\be_1+\be_2}\cdots\psi_{\be_1+\be_2+\cdots+\be_\ell}.
\end{align}
Alternatively, this is the result of taking $m=2$ in \eqref{eq:explicit-solution2}.
\item If $\be_1,\ldots,\be_{\ell-1}\in Q_+(\Fl)$ and $\be_\ell\notin Q_+(\Fl)$, the resolved recursion \eqref{eq:resolved-recursion}, and the fact that $P_{\Fg,\Fl}(\be_1,\ldots,\be_k)=0$ for all $k=1,2,\ldots,\ell-1$ (by \eqref{eq:resolved-recursion-levi}) gives a single term:
\begin{align}
	P_{\Fg,\Fl}(\be_1,\be_2,\ldots,\be_\ell) &= -E(\be_1,\be_2,\ldots,\be_\ell) \\ 
	&\overset{\eqref{eq:E-coefficients} }{=}-\psi_{\be_1+\be_2+\cdots+\be_\ell}\psi_{\be_2+\cdots+\be_\ell}^{\be_1}\cdots\psi_{\be_\ell}^{\be_1+\be_2+\cdots+\be_{\ell-1}}.
\end{align}
On the other hand, using the relative ABRR recursion \eqref{eq:relative-ABRR-recursion} in this case would lead to a more involved expression. Likewise for \eqref{eq:explicit-solution2} with $m=\ell$. This illustrates that the resolved recursion \eqref{eq:resolved-recursion} and the relative ABRR recursion \eqref{eq:relative-ABRR-recursion} play complementary roles. Both are recursions for the same family of rational functions, $P_{\Fg,\Fl}(\be_1,\ldots,\be_\ell)$, but their immediate application lead to different looking expressions. Those expressions are ultimately equal, due to the cocycle identity \eqref{eq:cocycle} for $\psi_\la$.
\end{enumerate}

\subsection{Finite-dimensional Reductive Lie Algebras}
\label{sec:example-reductive}

Let $\Fg$ be a finite-dimensional complex reductive Lie algebra. For convenience of the reader, we provide details (see e.g. \cite[Eq. (4.1)]{Khoroshkin2004}) for computing the key quantity $\psi_\la$ in this case. Choose
\begin{itemize}
\item a non-degenerate invariant symmetric bilinear form $(\cdot,\cdot)$ on $\Fg$;
\item a Cartan subalgebra $\Fh$ of $\Fg$;
\item a polarization of the root system: $\De=\De_+\sqcup (-\De_+)$, $(\De_++\De_+)\cap\De\subseteq\De_+$;
\item a nonzero root vector $e_\be\in \Fg_\be$ for each $\be\in\De_+$;
\item a basis $\{h_i\}_{i=1}^n$ for $\Fh$.
\end{itemize}
For example, on the semisimple part $[\Fg,\Fg]$ we can choose the Killing form, and on the center of $\Fg$ any non-degenerate symmetric bilinear form will do. Or, if $\Fg$ is a matrix Lie algebra one can usually use a trace form $(x,y)=\op{Tr}(xy)$.
By invariance of the form, for $\al,\be\in\De$ we have $(\Fg_\al,\Fg_\be)\neq 0$ iff $\al+\be=0$. Since the root spaces are one-dimensional, there is for each $\be\in\De_+$ a unique $f_\be\in \Fg_{-\be}$ with 
\begin{equation}
(e_\be,f_\be)=1.	
\end{equation}
Similarly, let $\{h^i\}_{i=1}^n$ denote the basis in $\Fh$ dual to $\{h_i\}_{i=1}^n$ with respect to the form:
\begin{equation}
(h^i,h_j)=\delta^i_j=\begin{cases}1,&i=j,\\0, &i\neq j.\end{cases}	
\end{equation}
Define $\la\mapsto h_\la$ to be the inverse of the map $\Fh\to\Fh^\ast, a\mapsto (a,\cdot)$. Equivalently,
\begin{equation}\label{eq:cartan-dual}
(h_\la,a)=\la(a),\quad\forall \la\in\Fh^\ast, a\in \Fh.	
\end{equation}
By invariance of the form, $(h,[e_\be,f_\be])=([h,e_\be],f_\be)=\be(h)(e_\be,f_\be)=\be(h)$ for all $h\in\Fh$, which means that
\begin{equation}\label{eq:ef-commutator}
	[e_\be,f_\be]=h_\be.
\end{equation}
Note that $(e_\be,f_\be,h_\be)$ need not be an $\Fsl_2$-triple, as we have not normalized the form (and we do not need to). Therefore the $h_\be$ should not be confused with the coroots.
Since $\{x_j\}_{j=1}^{\dim\Fg}:=\{h_i\}_{i=1}^n\cup\{e_\be\}_{\be\in\De_+}\cup\{f_\be\}_{\be\in\De_+}$ and $\{x^j\}_{j=1}^{\dim\Fg}:=\{h^i\}_{i=1}^n\cup\{f_\be\}_{\be\in\De_+}\cup\{e_\be\}_{\be\in\De_+}$ are dual bases for $\Fg$ with respect to the pairing $(\cdot,\cdot)$, one half times the quadratic Casimir associated with $(\cdot,\cdot)$ equals
 \begin{equation}
 	\Omega = \frac{1}{2}\sum_j x^j x_j = \frac{1}{2}\sum_{i=1}^n h^i h_i +\frac{1}{2}\sum_{\be\in\De_+} (f_\be e_\be + e_\be f_\be) = \Omega_0+\sum_{\be\in \De_+} f_\be e_\be.
 \end{equation}
Here we used $e_\be f_\be=f_\be e_\be + h_\be$ by \eqref{eq:ef-commutator} and introduced
\begin{equation}
	\Omega_0 = \frac{1}{2}\sum_{\be\in\De_+} h_\be + \frac{1}{2}\sum_{i=1}^n h^ih_i = h_\rho + \frac{1}{2}\sum_{i=1}^n h^ih_i,
\end{equation}
where $\rho=\frac{1}{2}\sum_{\be\in\De_+}\be$ is the Weyl vector. 
For $\la\in\Fh^\ast$, the $\la$-shift automorphism defined in \eqref{eq:shift-def} applied to $\Omega_0$ equals
\begin{equation}
	\Omega_0^\la = h_\rho + \la(h_\rho) + \frac{1}{2}\sum_i \big(h^i+\la(h^i)\big)\big(h_i+\la(h_i)\big).
\end{equation}
Therefore,
\begin{align}
	\Omega_0^\la-\Omega_0 &= \la(h_\rho)+ \frac{1}{2}\sum_i \Big(\la(h^i)h_i + \la(h_i)h^i + \la(h^i)\la(h_i)\Big) \nonumber \\ 
	&= \la(h_\rho) + \frac{1}{2}(\la,\la) + \sum_i \la(h^i)h_i\nonumber\\
	&=(\la,\rho) + \frac{1}{2}(\la,\la) + h_\la,
\end{align}
using $h=(h^i,h)h_i$ for $h\in\Fh$ and \eqref{eq:cartan-dual}, and we set $(\la,\mu)=(h_\la,h_\mu)$ for $\la,\mu\in\Fh^\ast$.
Thus we have the following explicit form of the quantities $\psi_\la$:
\begin{equation}\label{eq:psi-reductive}
	\psi_\la = \frac{1}{\Omega_0^\la-\Omega_0}=\frac{1}{h_\la + (\la, \rho) + \frac{1}{2}(\la,\la)}.
\end{equation}
We can also observe that since $(h_\la)^\mu=h_\la + \mu(h_\la) = h_\la+(\mu,\la)$ we have
\begin{equation}\label{eq:psi-shift-reductive}
	\psi_\la^\mu = \frac{1}{h_\la + (\mu,\la) + (\rho,\la)+ \frac{1}{2}(\la,\la)}.
\end{equation}
Lastly, for the relative case, to choose a $\De_+(\Fl)$ satisfying \eqref{eq:levi-cone-property} is equivalent to selecting a subset $\Pi(\Fl)$ of the simple roots of $\Fg$ and taking $\De_+(\Fl) = \De_+\cap \mathbb{Z}_{\ge 0}\Pi(\Fl)$.

\subsection{Contragredient Lie Superalgebras}
In the terminology of \cite{Serganova2010} or \cite{Rao2021}, let $\Fg(A)$ be the contragredient Lie superalgebra associated to a symmetrizable, admissible square matrix $A=(a_{ij})_{i,j\in I}$ where $I=I_{\bar 0}\sqcup I_{\bar 1}$ is a finite index set partitioned into even and odd subsets. Then $\Fg(A)$ has a Casimir operator $\Omega$ of the required form \eqref{eq:casimir}. That $A$ is admissible means that the adjoint action on $\Fg(A)$ of the simple root vectors is locally nilpotent. This ensures that the action of $\Omega$ is well-defined on the universal Verma module. The $\Omega_0$ part has the form $\Omega_0=h_\rho+\frac{1}{2}\sum_i h^i h_i$ just like in the reductive case, except that the Weyl vector $\rho\in\Fh^\ast$ is (instead of the half sum of positive roots) chosen so that $2(\rho,\al_i)=(\al_i,\al_i)$ for all simple roots $\al_i$. The assumptions of Section \ref{sec:setup} hold. Consequently, all the results about relative extremal projectors covered in the previous sections go through. See also \cite[Prop.~2.4]{Rao2021} specializing this to the basic classical Lie superalgebras.

\subsection{The Non-Relative Case}\label{sec:non-relative-case}
When we choose $\De_+(\Fl)=\emptyset$, we have $\Fl=\Fh$ and $Q_+(\Fl)=\{0\}$. Thus $\delta_{\be\in Q_+(\Fh)}=0$ for all $\be\in Q_{\succ 0}$. Hence, the recursion \eqref{eq:relative-ABRR-recursion} becomes
\begin{equation}
	P_\Fg(\be_1,\be_2,\ldots,\be_\ell) = -\psi_{\be_1}P_{\Fg}(\be_1+\be_2,\be_3,\ldots,\be_\ell).
\end{equation}
This is the classical ABRR recursion for the extremal projector \cite[Thm.~2 and Cor.~1]{Khoroshkin2004},\cite[Eq.~(5.4)]{KhoOgi2011}, but stated in terms of the coefficients. This recursion can of course be explicitly solved:
\begin{equation}\label{eq:non-relative-projector-coefficients}
	P_\Fg(\be_1,\be_2,\ldots,\be_\ell) = (-1)^\ell \psi_{\be_1}\psi_{\be_1+\be_2}\cdots\psi_{\be_1+\be_2+\cdots+\be_\ell}.
\end{equation}
This gives the following result.
\begin{Corollary} Let $\Fg$ be a Lie superalgebra satisfying the assumptions in Section~\ref{sec:setup}. Then the extremal projector $P_\Fg$ for $\Fg$ may be expanded as
\begin{equation}
	P_\Fg = 1+\sum_{\ell=1}^\infty \sum_{(\be_1,\ldots,\be_\ell)\in(\De_+)^\ell} 
	P_\Fg(\be_1,\ldots,\be_\ell) \sum_{i_j\in I_{\be_j}} f_{\be_1}^{i_1}\cdots f_{\be_\ell}^{i_\ell}e_{\be_\ell}^{i_\ell}\cdots e_{\be_1}^{i_1}
\end{equation}
where $P_\Fg(\be_1,\ldots,\be_\ell)$ are given by \eqref{eq:non-relative-projector-coefficients}.
\end{Corollary}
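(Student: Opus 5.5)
The corollary is the special case $\De_+(\Fl)=\emptyset$ of Theorem~\ref{thm:relative-extremal-projector-sum}, combined with the explicit solution of the non-relative ABRR recursion. So the plan has three steps: specialize, solve the recursion, and substitute.

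\emph{Specialization.} Take $\De_+(\Fl)=\emptyset$. Condition \eqref{eq:levi-cone-property} holds vacuously: a sum of elements of $\De_+$ lies in $Q_+(\Fh)=\{0\}$ only when it is the empty sum, since each $\be_i\in\De_+$ is nonzero. We then have $\Fl=\Fh$ and $Q_+(\Fl)=\{0\}$. Comparing \eqref{eq:relative-extremal-projector-operator-def} with the definition of $P_\Fg$ gives $P_{\Fg,\Fh}=P_\Fg$ as operators on $M$. Theorem~\ref{thm:relative-extremal-projector-sum} therefore yields the displayed expansion of $P_\Fg$, with coefficients $P_\Fg(\be_1,\ldots,\be_\ell):=P_{\Fg,\Fh}(\be_1,\ldots,\be_\ell)$ defined by \eqref{eq:P-coefficients-def}. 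Case \eqref{eq:resolved-recursion-levi} never arises for $\ell>0$, because every $\be_i\in Q_{\succ 0}$ lies outside $\{0\}$.

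\emph{Solving the recursion.} Theorem~\ref{thm:relative-ABRR} applies with $\delta_{\be\in Q_+(\Fh)}=0$ for all $\be\in Q_{\succ0}$. It gives $P_\Fg(\be)=-\psi_\be$ and, for $\ell\ge2$,
\[
P_\Fg(\be_1,\ldots,\be_\ell)=-\psi_{\be_1}P_\Fg(\be_1+\be_2,\be_3,\ldots,\be_\ell).
\]
I would prove \eqref{eq:non-relative-projector-coefficients} by induction on $\ell$, taking $\ell=1$ as the base case. Applying the induction hypothesis to the sequence $(\be_1+\be_2,\be_3,\ldots,\be_\ell)$, whose entries all lie in $Q_{\succ0}$, gives
\[
(-1)^{\ell-1}\psi_{\be_1+\be_2}\psi_{\be_1+\be_2+\be_3}\cdots\psi_{\be_1+\cdots+\be_\ell}.
\]
Multiplying by $-\psi_{\be_1}$ produces exactly the claimed product. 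Note that the induction needs the recursion for arguments in $Q_{\succ0}$, not merely in $\De_+$. This is why Theorem~\ref{thm:relative-ABRR} is stated on $(Q_{\succ0})^\ell$, and I rely on that.

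\emph{Substitution and the main point to check.} Substituting the explicit coefficients into the expansion finishes the proof. The only real point is that the expansion of Theorem~\ref{thm:relative-extremal-projector-sum} involves coefficients indexed only by roots in $\De_+$, while the recursion is used on $Q_{\succ0}$. Since these coefficients are one consistent family of rational functions defined on all of $(Q_{\succ0})^\ell$, evaluating the closed form at $\be_i\in\De_+$ is legitimate, so I expect no genuine obstacle.
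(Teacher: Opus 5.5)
Your proposal is correct and follows the paper's argument: set $\De_+(\Fl)=\emptyset$ so that $P_{\Fg,\Fh}=P_\Fg$, read off the expansion from Theorem~\ref{thm:relative-extremal-projector-sum}, note that the $\delta$-term in Theorem~\ref{thm:relative-ABRR} vanishes, and solve the resulting classical ABRR recursion by induction to get \eqref{eq:non-relative-projector-coefficients}. One small wording point: \eqref{eq:levi-cone-property} holding vacuously for $\De_+(\Fl)=\emptyset$ does not follow merely from each $\be_i$ being nonzero; it follows from $Q_+$ being pointed (no nonempty sum of elements of $\De_+$ vanishes), which the paper tacitly assumes when it calls $\preceq$ a partial order and takes $\De_+\subseteq Q_{\succ 0}$.
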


If we further specialize to the case of a finite-dimensional reductive Lie algebra over $\mathbb{C}$, or a (finite-dimensional) basic classical Lie superalgebra over $\mathbb{C}$, then all root spaces are one-dimensional, and the formula simplifies to
\begin{equation}
	P_\Fg = 1+\sum_{\ell=1}^\infty\; \sum_{(\be_1,\ldots,\be_\ell)\in(\De_+)^\ell} 
	P_\Fg(\be_1,\ldots,\be_\ell) f_{\be_1}\cdots f_{\be_\ell}e_{\be_\ell}\cdots e_{\be_1}.
\end{equation}
This agrees with \cite{Khoroshkin2004} for purely even $\Fg$.


\begin{thebibliography}{MM}
\bibitem{ABRR}
	{\sc D. Arnaudon, E. Buffenoir, E. Ragoucy, Ph. Roche},
	{\em Universal Solutions of Quantum Dynamical Yang–Baxter Equations},
	Letters in Mathematical Physics 44, 201--214 (1998).
	\href{https://doi.org/10.1023/A:1007498022373}{doi:10.1023/A:1007498022373}
\bibitem{AST1971}
	{\sc R. M. Asherova, Yu. F. Smirnov, V. N. Tolstoy},
	{\em Projection Operators for Simple Lie Groups},
	Teoreticheskaya i Matematicheskaya Fizika, Vol. 8, No. 2, 1971, 255--271.
\bibitem{Clebsch1862}
	{\sc A. Clebsch},
	{\em Ueber eine Eigenschaft der Kugelfunctionen},
	Journal für die reine und angewandte Mathematik,
	vol. 1862, no. 60, 1862, pp. 343--350.
	\href{https://doi.org/10.1515/crll.1862.60.343}{doi:10.1515/crll.1862.60.343}
\bibitem{CS03} 
	{\sc C. Conley, M. Sepanski},
	{\em Relative extremal projectors},
	Adv. Math. 174(2) (2003), 155--166.
\bibitem{CS05}
	{\sc C. Conley, M. Sepanski},
	{\em Infinite commutative product formulas for relative extremal projectors},
	Adv. Math. 196(1) (2005), 52--77.
\bibitem{CS15}
	{\sc C. Conley, M. Sepanski},
	{\em Factorizations of relative extremal projectors},
	p-Adic Numbers Ultrametric Anal. Appl. 7 no. 4 (2015), 276--290.
	\href{https://doi.org/10.1134/S2070046615040044}{doi:10.1134/S2070046615040044},
	\href{https://doi.org/10.48550/arXiv.1507.02587}{arXiv:1507.02587}
\bibitem{EV1999}
	{\sc P. Etingof, A. Varchenko},
	{\em Exchange Dynamical Quantum Groups},
	Commun. Math. Phys. 205, 19--52 (1999).
\bibitem{FH2025}
	{\sc D. Fillmore, J. T. Hartwig},
	{\em Subcategories of Module Categories via Restricted Yoneda Embeddings},
	arXiv:2507.12778 [math.RT]. 
	\href{https://doi.org/10.48550/arXiv.2507.12778}{doi:10.48550/arXiv.2507.12778}
\bibitem{ForQuiSki2025}
	{\sc J.-F. Fortin, L. Quintavalle, W. Skiba},
	{\em Virasoro completeness relation and the inverse Shapovalov form},
	Phys. Rev. D 111, 085010, 7 April, 2025.
	\href{https://doi.org/10.1103/PhysRevD.111.085010}{doi:10.1103/PhysRevD.111.085010}
\bibitem{HW2022}
	{\sc J. T. Hartwig, D. A. Williams II},
	{\em Diagonal reduction algebra for $\mathfrak{osp}(1|2)$},
	Theor Math Phys 210, 155--171 (2022).
	\href{https://doi.org/10.1134/S0040577922020015}{doi:10.1134/S0040577922020015}
\bibitem{Khoroshkin2004}
	{\sc S. M. Khoroshkin},
	{\em Extremal Projector and Dynamical Twist}
	Theoretical and Mathematical Physics 139(1): 582--597 (2004).
\bibitem{KhoOgi2008}
	{\sc S. Khoroshkin, O. Ogievetsky},
	{\em Mickelsson algebras and Zhelobenko operators},
	Journal of Algebra 319 (2008) 2113--2165.
\bibitem{KhoOgi2011}
	{\sc S. Khoroshkin, O. Ogievetsky},
	{\em Structure Constants of Diagonal Reduction Algebras of gl Type},
	Symmetry, Integrability and Geometry: Methods and Applications SIGMA 7 (2011), 064, 34 pages.
	\href{https://doi.org/10.3842/SIGMA.2011.064}{doi:10.3842/SIGMA.2011.064}
\bibitem{Low1958}
	{\sc P.-O. Löwdin},
	{\em Angular momentum wave functions constructed by projection operators},
	Technical Note No. 12, 
	Uppsala Univ.(Sweden). Quantum Chemistry Group, 1958.
\bibitem{Low1964}
	{\sc P.-O. Löwdin},
	{\em Angular Momentum Wavefunctions Constructed by Projector Operators},
	Rev. Mod. Phys. 36, 966, 1964.
	\href{https://doi.org/10.1103/RevModPhys.36.966}{doi:10.1103/RevModPhys.36.966}
\bibitem{Rao2021}
	{\sc S. Eswara Rao},
	{\em Generalized Casimir operators for Lie superalgebras},
	J. Math. Phys. 62, 101703 (2021),
	\href{https://doi.org/10.1063/5.0056538}{doi:10.1063/5.0056538}
\bibitem{Serganova2010}
	{\sc V. Serganova},
	{\em Kac-Moody Superalgebras and Integrability},
	in: Neeb, KH., Pianzola, A. (eds) Developments and Trends in Infinite-Dimensional Lie Theory.
	Progress in Mathematics, vol 288. Birkhäuser Boston, 2011.
	\href{https://doi.org/10.1007/978-0-8176-4741-4_6}{doi:10.1007/978-0-8176-4741-4\_6}
\bibitem{Shapiro1965}
	{\sc J. Shapiro},
	{\em Matrix Representation of the Angular Momentum Projection
Operator},
	J. Math. Phys. 6, (1965), 1680--1691.
\bibitem{Tolstoy1989}
	{\sc V. N. Tolstoy},
	{\em Extremal projections for contragredient Lie algebras and superalgebras of finite growth},
	Russian Math. Surveys 44(1) (1989) 257--258.
	\href{https://doi.org/10.1070/RM1989v044n01ABEH002023}{doi:10.1070/RM1989v044n01ABEH002023}
\bibitem{Tolstoy2005} 
	{\sc V. N. Tolstoy},
	{\em Fortieth Anniversary of the Extremal Projector Method},
	in: Contemporary Mathematics 391, ``Non-commutative Geometry and Representation Theory in Mathematical Physics, Karlstad, July 2004'', 371--384, 2005.
\href{https://doi.org/10.1090/conm/391}{doi:10.1090/conm/391},
\href{https://doi.org/10.48550/arXiv.math-ph/0412087}{doi:10.48550/arXiv.math-ph/0412087}
\bibitem{Tolstoy2011} 
	{\sc V. N. Tolstoy},
	{\em Extremal Projectors for Contragredient Lie (Super)Symmetries (Short Review)},
	Physics of Atomic Nuclei, Vol. 74, No. 12, (2011) 1747--1757.
\bibitem{Zh1989}
	{\sc D. P. Zhelobenko}
	{\em Extremal projectors and generalized Mickelsson algebras over reductive Lie algebras},
	Mathematics of the USSR-Izvestiya, Vol. 33, No. 1, (1989), 85--100.
	\href{https://doi.org/10.1070/IM1989v033n01ABEH000815}{doi:10.1070/IM1989v033n01ABEH000815}
\bibitem{Zh1993}
	{\sc D. P. Zhelobenko},
	{\em Constructive modules and extremal projectors over Chevalley algebras},
	Funct. Anal. Appl. 27(3) (1993), 5--14.
\end{thebibliography}
\end{document}